\documentclass[10pt]{amsart}

\usepackage{amssymb,amsthm,amstext,amsfonts,amsmath,yfonts,latexsym}
\usepackage{xypic}

\newtheoremstyle{theorem}
     {11pt}
     {11pt}
     {}
     {}
     {\bfseries}
     {}
     {.5em}
     {\noindent\thmnumber{#2}. \thmname{#1}\thmnote{#3}}

\theoremstyle{theorem}

\newcommand{\csf}{{}\sp\omega{2}}
\newcommand{\csfo}{\omega\times{}\sp\omega{2}}
\newcommand{\cl}[2][X]{\mathrm{cl}_{#1}({#2})}
\newcommand{\bd}[2][X]{\mathrm{bd}_{#1}({#2})}
\newcommand{\F}{\mathcal{F}}
\newcommand{\meager}{\mathcal{M}}
\newcommand{\covM}{\mathrm{cov}(\mathcal{M})}
\newcommand{\cofM}{\mathrm{cof}(\mathcal{M})}
\newcommand{\Pint}{\mathfrak{p}}
\newcommand{\C}{\mathfrak{c}}
\newcommand{\class}{\mathbf{K}}
\newcommand{\poset}{\mathbb{P}}
\newcommand{\remcard}{\textfrak{re}}
\newcommand{\dg}{\textfrak{dg}}
\newcommand{\U}{\mathcal{U}}

\newcommand{\G}{\mathcal{G}}
\newcommand{\MA}[2][\kappa]{\mathbf{MA}_{#2}(#1)}
\newcommand{\ctble}{\textbf{ctble}}
\newcommand{\sigmacent}{\sigma\textbf{-cent}}

\newcommand{\cofnwdX}[1]{\mathrm{cof}(\mathrm{nwd}(#1))}
\newcommand{\Vuniverse}{\mathbf{V}}
\newcommand{\Q}{\mathbb{Q}}
\newcommand{\nwd}{\mathfrak{N}}
\newcommand{\Ex}[2][X]{\mathrm{Ex}_{#1}(#2)}
\newcommand{\pair}[1]{\langle #1 \rangle}
\newcommand{\B}{\mathcal{B}}

\renewcommand{\d}{\mathfrak{d}}
\newcommand{\nonM}{\mathrm{non}(\mathcal{M})}
\newcommand{\bairew}{{}\sp{\omega}{\omega}}
\newcommand{\gen}{\mathcal{G}}

\newtheorem{thm}{Theorem}[section]
\newtheorem{lemma}[thm]{Lemma}
\newtheorem{propo}[thm]{Proposition}
\newtheorem{coro}[thm]{Corollary}

\newtheorem{ex}[thm]{Example}
\newtheorem{ques}[thm]{Question}

\title{Remote filters and discretely generated spaces}
\author{Rodrigo Hern\'andez-Guti\'errez}
\address{UAP Cuautitl\'an Izcalli, Universidad Aut\'onoma del Estado de M\'exico. Paseos de las Islas S/N, Atlanta 2da. secci\'on. Cuautitl\'an Izcalli, 54740, M\'exico.  }
\email{rjhernandezg@uaemex.mx}
\date{\today}

\subjclass[2010]{54A25, 54D40, 54D80, 54A35, 03E17}
\keywords{Discretely generated space, one-point compactification, remote point, small uncountable cardinals}

\begin{document}

\begin{abstract}
Alas, Junqueira and Wilson asked whether there is a discretely generated locally compact space whose one point compactification is not discretely generated and gave a consistent example using CH. Their construction uses a remote filter in $\csfo$ with a base of order type $\omega_1$ when ordered modulo compact subsets. In this paper we study the existence and preservation (under forcing extension) of similar types of filters, mainly using small uncountable cardinals. With these results we show that the CH example can be constructed in more general situations.
\end{abstract}

\maketitle

\section{Introduction}

A topological space $X$ is said to be \emph{discretely generated} at a point $p\in X$ if for every $A\subset X$ with $p\in\cl{A}$ there is a discrete subset $D\subset A$ such that $p\in\cl{D}$. Then $X$ is discretely generated if it is discretely generated at each of its points. This notion was first studied in \cite{disc_gen_first}. 

Notice that first countable spaces are discretely generated. Other examples of discretely generated spaces include box products of monotonically normal spaces (\cite[Theorem 26]{disc_gen_box}) and countable products of monotonically normal spaces (\cite[Corollary 2.6]{junq_wil_prods_disc_gen}). Also, a compact dyadic space is discretely generated if and only if it is metrizable (\cite[Theorem 2.1]{disc_gen_box}). Recently, Alas, Junqueira and Wilson have proved the following result. 

\begin{ex}\label{resultch}\cite[Example 2.13]{alas_junq_wil_class_disc_gen}
CH implies that there is a first-countable, locally compact, Hausdorff and discretely generated space with its one-point compactification not discretely generated.
\end{ex}

The construction of Example \ref{resultch} uses a remote point (see the definition below) of $\csfo$ . The existence of remote points (of separable metrizable spaces) in ZFC was a hard problem that was finally solved by van Douwen (\cite{vd51}) and independently by Chae and Smith (\cite{chae-smith}). However, Example \ref{resultch} requires that the remote filter considered has a base of order type $\omega_1$ with respect to inclusion modulo compact sets and the known ZFC constructions do not have this property (in fact, it is consistent that such a filter does not exist by Corollary \ref{inequalities} below). Thus, the previously named authors posed the following general problem.

\begin{ques} (Alas, Junqueira and Wilson, \cite{alas_junq_wil_class_disc_gen})\label{thequestion}
Is there a locally compact Hausdorff discretely generated space with its one point compactification not discretely generated?
\end{ques}

The purpose of this note is to show that an example similar to Example \ref{resultch} can be constructed in some models of the negation of CH. From the topological point of view, we obtain the following result.

\begin{thm}\label{topothm}
If $\Pint=\cofM$, then there is a locally compact, discretely generated Hausdorff space $X$ with its one-point compactification not discretely generated. Moreover, every point of $X$ has character strictly less than $\Pint$.
\end{thm}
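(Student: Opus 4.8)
The plan is to run the construction behind Example \ref{resultch} on a remote filter whose base has length $\Pint$ instead of $\omega_1$: the hypothesis $\Pint=\cofM$ enters only in producing that filter, and the rest is a routine adaptation in which one tracks the length of the base to bound the character. By the results proved above, $\Pint=\cofM$ yields a remote filter $\F$ on $\csfo$ with a base $\{F_\alpha:\alpha<\Pint\}$ that is strictly decreasing modulo compact sets and consists of noncompact sets, ``remote'' meaning that every nowhere dense $N\subseteq\csfo$ satisfies $F_\alpha\cap N=\emptyset$ for some $\alpha<\Pint$; I take each $F_\alpha$ clopen, so that $F_\beta\setminus F_\alpha$ is compact whenever $\beta<\alpha$. (The balance that makes a recursion of length $\Pint$ succeed is that $\cofM$ controls the family of nowhere dense sets that must be avoided, while $\Pint$ is the length up to which a decreasing chain of noncompact clopen sets can always be extended to a noncompact lower bound.)

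Following \cite{alas_junq_wil_class_disc_gen}, I then build $X$ so that $\csfo$ is a dense subspace carrying its usual (second countable, locally compact, zero-dimensional) topology, together with new points indexed along the base of $\F$, the neighbourhoods of the new points being described from the compact tails $F_\beta\setminus F_\alpha$ ($\beta<\alpha$) of the filter and the $\sigma$-compact exhaustion of $\csfo$ --- exactly the construction of Example \ref{resultch} with $\omega_1$ replaced by $\Pint$. That $X$ is Hausdorff and locally compact is verified as there. Discrete generation of $X$ is immediate at a point of $\csfo$ by first countability, and at a new point $x_\alpha$ one extracts the required discrete set as in \cite{alas_junq_wil_class_disc_gen}, using that the closure of a discrete --- hence nowhere dense --- subset of $\csfo$ misses some $F_\gamma$ and therefore meets only an initial segment of the new points; only regularity and uncountability of $\Pint$ are used here, so the argument transfers unchanged. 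For the failure of discrete generation of $\alpha X=X\cup\{\infty\}$ at $\infty$, take the bad set $A$ of \cite{alas_junq_wil_class_disc_gen}: a subset of $\csfo$ that lies in no compact subset of $X$ (so $\infty\in\cl[\alpha X]{A}$), yet each of whose discrete subsets, being nowhere dense in $\csfo$, is disjoint from some $F_\gamma$ by remoteness and hence is captured by finitely many of the new points, so has compact closure in $X$; consequently no discrete subset of $A$ has $\infty$ in its closure, and $\alpha X$ is not discretely generated.

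Finally, the character estimate: a point of $\csfo$ has a countable neighbourhood base, while the new point indexed by $\alpha<\Pint$ has a neighbourhood base parametrised by an initial segment of $\alpha$ together with the countable exhaustion of $\csfo$, so its character is $\max(\aleph_0,\operatorname{cf}\alpha)\le|\alpha|<\Pint$; the ``point at level $\Pint$'' is exactly $\infty\notin X$, so every point of $X$ indeed has character $<\Pint$. The main obstacle is not a new one: the real content is the filter of Step~1, produced under $\Pint=\cofM$ in the sections above, and what remains is only to check that the verifications of \cite{alas_junq_wil_class_disc_gen} for discrete generation of $X$ and for the failure at $\infty$ use nothing about $\omega_1$ beyond its being an uncountable regular cardinal and beyond remoteness of the filter --- which inspection confirms --- so that substituting the length-$\Pint$ filter is legitimate.
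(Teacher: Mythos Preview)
There is a genuine gap in your verification of discrete generation at the new points $x_\alpha$. In the original construction of \cite{alas_junq_wil_class_disc_gen} under CH, the space is \emph{first countable}: every $\alpha<\omega_1$ is countable, so each $x_\alpha$ has a countable local base, and discrete generation at $x_\alpha$ is automatic. When $\Pint>\omega_1$ (which is certainly possible under $\Pint=\cofM$, e.g.\ under MA${}+\neg$CH), the point $x_\alpha$ has character $|\alpha|$, which is uncountable for $\alpha\geq\omega_1$; first countability fails, and the argument of \cite{alas_junq_wil_class_disc_gen} does not transfer. Your stated justification --- that a discrete subset of $\csfo$ is nowhere dense, hence misses some $F_\gamma$, hence its closure meets only an initial segment of new points --- goes in the wrong direction: it bounds the closure of a \emph{given} discrete set, but the problem is to \emph{produce} a discrete $D\subset A$ with $x_\alpha\in\cl D$ when $A\subset\csfo$ and $x_\alpha\in\cl A$.

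The paper supplies exactly the missing ingredient. Given $A\subset\csfo$ with $x_\alpha\in\cl A$, restrict to a clopen set $Y$ containing $x_\alpha$ but not the later new points, set $B=A\cap Y$, and pull $x_\alpha$ back along the extension $g:\beta B\to Y$ of the inclusion. The closed set $g^{-1}[x_\alpha]\subset\beta B$ has character $<\Pint$; since $\Pint\leq\covM\leq\remcard(B)$ by Corollary~\ref{inequalities}, this set cannot be remote (equivalently, far) in $\beta B$, so there is a discrete $D\subset B\subset A$ with $x_\alpha\in\cl D$. Thus the lower bound $\covM\leq\remcard$ proved earlier is not incidental but is the substantive step that replaces first countability; your claim that ``only regularity and uncountability of $\Pint$ are used'' overlooks this.
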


The proof of this result will proceed by methods analogous to those of Example \ref{resultch}. Thus, we will be considering the existence of special kinds of remote filters in separable metrizable spaces. In particular we will ask what the minimal character of a remote filter is and construct remote filters with bases that are well-ordered. This naturally gives rise to questions of independence, some of which are not directly related to Question \ref{thequestion} and are more set-theoretical in nature.

The paper will be organized as follows. Sections 2 is introductory; we give the notation, conventions and known results we will be using. The main body of the paper is section 3, which contains proofs of various cardinal inequalities related to remote points and the proof of Theorem \ref{topothm}. Section 4 discusses the question of when a remote filter in $\csfo$ is still remote in a forcing extension. Finally, section 5 contains some final remarks and the questions we were unable to solve.

We remark that Question \ref{thequestion} remains unsettled in ZFC.

\section{Preliminaries}

The Cantor set is the topological product $\csf$ and $\bairew$ is the set of functions from $\omega$ into itself. Recall that ${}\sp{<\omega}A=\bigcup\{{}\sp{n}A:n<\omega\}$ is the set of finite sequences of elements of a set $A$. A space will be called crowded if it contains no isolated points. The space of rational numbers is $\Q$. In a topological space $X$, given $A\subset X$, the closure of $A$ will be denoted by $\cl{A}$ and its boundary by $\bd{A}$. A standard reference for topological concepts is of course \cite{eng}.

For every Tychonoff space $X$, $\beta X$ denotes the \v Cech-Stone compactification of $X$ and $X\sp\ast=\beta X\setminus X$. If $U\subset X$ is open, let $\Ex{U}=\beta X\setminus\cl[\beta X]{X\setminus U}$.

Recall that a point $p\in X\sp\ast$ is said to be \emph{remote} if $p\notin\cl[\beta X]{A}$ for every nowhere dense subset $A$ of $X$. Extending the definition of remote point, we will say that $F\subset\beta X$ is a \emph{remote set of $X$} if $F\cap\cl[\beta X]{N}=\emptyset$ for every nowhere dense subset $N$ of $X$. Let us remark that in this paper all remote sets of $X$ considered will be closed sets in $\beta X$. By Stone's duality, if $X$ is strongly $0$-dimensional then closed subsets of $\beta X$ correspond to filters in the Boolean algebra of clopen subsets of $X$. Thus, we may dually speak of \emph{remote filters} of (clopen sets of) $X$.

It is well-known (and easy to prove) that if $X$ is crowded and metrizable, a subset $F\subset\beta X$ is remote if and only if it is \emph{far} from $X$; that is, if $F\cap\cl[\beta X]{D}=\emptyset$ for every discrete $D\subset X$.

Recall that in any compact Hausdorff space, the character and the pseudocharacter of closed subsets are equal. This motivates the following notation we will use in this note. When $X$ is any Tychonoff space and $F\subset \beta X$ is closed, the character of $F$ (in $\beta X$!), $\chi(F)$ will be defined to be the minimal $\kappa$ such that there is a collection $\U$ of open subsets of $\beta X$ such that $F=\bigcap\{U\in\U:F\subset U\}$ and $|\U|=\kappa$. A base of a filter $\F$ of clopen sets of a $0$-dimensional space $X$ is a collection $\U$ of clopen subsets of $X$ that generates $\F$ and the character of $\F$, $\chi(\F)$ is the minimal cardinality of a base of $\F$; this is clearly consistent. 

If $U$ and $V$ are clopen sets of a space $X$, we define $U\subset\sp\ast V$ to mean that $U\setminus V$ is compact. Notice that this implies that $\cl[\beta X]{U}\cap X\sp\ast\subset\cl[\beta X]{V}\cap X\sp\ast$.

Example \ref{resultch} was constructed by the use of remote filters in $\csfo$. In particular, the authors of that paper implicitly proved the following statement:

\begin{propo}\label{constructionch}
If there is a remote filter of $\csfo$ with a base of order type $\omega_1$ with the relation $\subset\sp\ast$, then there exists a first countable, locally compact and $0$-dimensional space $X$ such that the one-point compactification of $X$ is not discretely generated.
\end{propo}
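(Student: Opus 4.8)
The plan is to reconstruct the part of Example~\ref{resultch}'s argument that turns the tower into a space. Write $Z=\csfo=\bigsqcup_{n<\omega}Z_n$ with each $Z_n$ a compact clopen copy of $\csf$; then $Z$ is crowded, locally compact, $\sigma$-compact, second countable and strongly $0$-dimensional, a clopen $U\subseteq Z$ is compact iff it meets finitely many $Z_n$, and remote $=$ far for $Z$ (preliminaries). Fix a base $\{U_\alpha:\alpha<\omega_1\}$ of the given remote filter $\F$ that is strictly $\subset^{\ast}$-decreasing; then each $U_\alpha$ is non-compact, and we may assume $U_0=Z$. Let $F=\bigcap_{\alpha<\omega_1}\cl[\beta Z]{U_\alpha}\cap Z^{\ast}$ be the far set dual to $\F$. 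The only use of remoteness will be: \emph{for every discrete $D\subseteq Z$ there is $\alpha_0<\omega_1$ with $U_\alpha\cap\cl[Z]{D}$ compact whenever $\alpha\geq\alpha_0$.} Indeed $F\cap\cl[\beta Z]{D}=\emptyset$, and the sets $\cl[\beta Z]{U_\alpha}\cap Z^{\ast}$ form a decreasing chain of closed subsets of the compact space $Z^{\ast}$ (the preliminaries note $U_\beta\subset^{\ast}U_\alpha\Rightarrow\cl[\beta Z]{U_\beta}\cap Z^{\ast}\subseteq\cl[\beta Z]{U_\alpha}\cap Z^{\ast}$), so some $\cl[\beta Z]{U_{\alpha_0}}\cap\cl[\beta Z]{D}\cap Z^{\ast}=\emptyset$, i.e.\ $U_{\alpha_0}\cap D$ (hence $U_{\alpha_0}\cap\cl[Z]{D}$) is compact; and $U_\alpha\setminus U_{\alpha_0}$ is compact for $\alpha\geq\alpha_0$.

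I would then attach to $Z$ a copy of $\omega_1$ ``at infinity'' along the tower. Put $X=Z\sqcup\{p_\alpha:\alpha<\omega_1\}$, keep $Z$ open with its usual topology, and let the basic neighbourhoods of $p_\alpha$ be
\[
N(\beta,\alpha,k)=\{p_\gamma:\beta<\gamma\leq\alpha\}\cup\Bigl((U_{\beta+1}\setminus U_{\alpha+1})\cap\bigcup_{m\geq k}Z_m\Bigr)\qquad(\beta<\alpha,\ k<\omega),
\]
reading $N_0(k)=\{p_0\}\cup\bigl((U_0\setminus U_1)\cap\bigcup_{m\geq k}Z_m\bigr)$. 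Writing $W_{\beta,\alpha}=U_{\beta+1}\setminus U_{\alpha+1}$, the $\subset^{\ast}$-order of the $U_\alpha$ gives: each $W_{\beta,\alpha}$ is non-compact; $W_{\delta,\gamma}\subset^{\ast}W_{\beta,\alpha}$ when $(\delta,\gamma]\subseteq(\beta,\alpha]$; and $W_{\beta,\alpha}\cap W_{\beta',\alpha'}$ is compact when $(\beta,\alpha]\cap(\beta',\alpha']=\emptyset$. Using these one checks that the $N(\beta,\alpha,k)$ determine a topology (extending that of $Z$) making $X$ Hausdorff (to separate $p_\alpha$ from $p_{\alpha'}$, $\alpha<\alpha'$, use neighbourhoods with disjoint ordinal-intervals and then make the $Z$-parts disjoint by the third fact); each $N(\beta,\alpha,k)$ is clopen and, by induction on $\alpha$, compact (in an open cover put $p_\alpha$ inside some $N(\beta_1,\alpha,k_1)$; the rest lies in $N(\beta,\beta_1,k)$—compact by induction—together with a compact subset of $Z$). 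Hence $X$ is locally compact and $0$-dimensional (the $N(\beta,\alpha,k)$ and the compact clopen subsets of $Z$ form a base); it is first countable because $\mathrm{cf}(\alpha)\leq\omega$ for $\alpha<\omega_1$; the closed subspace $\{p_\alpha:\alpha<\omega_1\}$ carries the order topology of $\omega_1$, so $X$ is not compact; and $Z$ is dense because every $N(\beta,\alpha,k)$ meets $Z$.

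Finally I would check that the one-point compactification $X^{+}=X\cup\{\infty\}$ is not discretely generated at $\infty$, with witness $A=Z$. As $Z$ is dense in the non-compact space $X$, no compact subset of $X$ contains $Z$, so $\infty\in\cl[X^{+}]{Z}$. Let $D\subseteq Z$ be discrete; since $Z$ is second countable $D$ is countable, and $\cl[X^{+}]{D}\cap X=\cl[X]{D}=\cl[X]{C}$ where $C=\cl[Z]{D}$, so it suffices to show $\cl[X]{C}$ is compact. Fix $\alpha_0$ as above. If $\alpha>\alpha_0$, choose $\beta\in(\alpha_0,\alpha)$; then $U_{\beta+1}\cap C$ is compact, say inside $Z_0\cup\dots\cup Z_{k-1}$, so $N(\beta,\alpha,k)\cap C=\emptyset$ and $p_\alpha\notin\cl[X]{C}$. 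Hence $\cl[X]{C}\setminus Z\subseteq\{p_\gamma:\gamma\leq\alpha_0\}$, a compact set. Also $U_{\alpha_0+1}\cap C$ is compact, say inside $Z_0\cup\dots\cup Z_{l-1}$, while $C\setminus U_{\alpha_0+1}$ is contained in $\bigl(N_0(k)\cup N(0,\alpha_0,k)\bigr)\cup(Z_0\cup\dots\cup Z_{k-1})$ for any $k$, since the $Z$-part of the compact clopen set $N_0(k)\cup N(0,\alpha_0,k)$ contains $(Z\setminus U_{\alpha_0+1})\cap\bigcup_{m\geq k}Z_m$ (here $U_0=Z$ is used). So $\cl[X]{C}$ lies in a finite union of compact clopen sets, hence is compact, and $\infty\notin\cl[X^{+}]{D}$. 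Since every point of $X^{+}$ other than $\infty$ lies in the open, first countable (hence discretely generated) subspace $X$, this shows $X^{+}$ is not discretely generated.

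The real work is the construction of $X$. Because the base is only $\subset^{\ast}$-decreasing, neighbourhoods of $p_\alpha$ modelled directly on $U_\alpha$ would be nested the wrong way and destroy Hausdorffness, forcing the ``interval'' sets $U_{\beta+1}\setminus U_{\alpha+1}$; then every verification (openness, clopen-ness and compactness of the $N(\beta,\alpha,k)$, local compactness of $X$) rests on telescoping identities that hold only modulo compact sets, so one must systematically absorb finitely many compact error terms. Equally essential is that the added points form a copy of $\omega_1$: this is what upgrades the merely eventual conclusion of remoteness into genuine compactness of $\cl[X]{D}$, by trapping the ``escape set'' $\cl[X]{D}\setminus Z$ inside a compact initial segment of $\omega_1$. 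This is also where the order type $\omega_1$ of the base is used; for a base well-ordered in type $\kappa$ by $\subset^{\ast}$ one would attach a copy of $\kappa$ and obtain a space of character $<\kappa$ at each point, as in Theorem~\ref{topothm}.
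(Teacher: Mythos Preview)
Your argument is correct and produces the same space that the paper builds; the paper does not prove Proposition~\ref{constructionch} directly (it attributes it to \cite{alas_junq_wil_class_disc_gen} and \cite{bella-simon-disc-gen}) but the proof of Theorem~\ref{topothm} carries out the identical construction in the more general setting $\kappa=\Pint$. The difference is one of presentation. The paper works inside $\beta Z$: it sets $V_\alpha=\cl[\beta Z]{U_\alpha}\cap Z^{\ast}$, partitions $Z^{\ast}$ into the closed pieces $F_\alpha$ (essentially $V_\alpha\setminus V_{\alpha+1}$, with the obvious modification at limits) together with $F=\bigcap_\alpha V_\alpha$, and takes the quotient of $\beta Z$ that collapses each piece to a point. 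This yields a compactification of $Z$ whose remainder is canonically $\kappa+1$; removing the top point $F$ gives the locally compact space. Hausdorffness, compactness and local compactness then come for free from the quotient description, at the cost of invoking $\beta Z$. You instead declare the neighbourhood system of each $p_\alpha$ by hand via the sets $N(\beta,\alpha,k)$ and verify all the axioms directly, which is more elementary but forces you to chase the ``modulo compact'' error terms through every step. The two descriptions give homeomorphic spaces: your $p_\alpha$ is the paper's $F_\alpha$, and your $N(\beta,\alpha,k)$ is exactly the trace on $Z\cup\{p_\gamma:\gamma<\omega_1\}$ of a basic quotient neighbourhood of $F_\alpha$.

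One small slip: in the last paragraph you write ``choose $\beta\in(\alpha_0,\alpha)$'', which is vacuous when $\alpha=\alpha_0+1$. You want $\beta\in[\alpha_0,\alpha)$; since you already know $U_\gamma\cap C$ is compact for all $\gamma\geq\alpha_0$, taking $\beta=\alpha_0$ still gives $U_{\beta+1}\cap C$ compact and the argument goes through unchanged.
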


In fact, this same construction was first carried out by Bella and Simon in order to construct under CH a compact pseudoradial space that is not discretely generated, see \cite[Theorem 7]{bella-simon-disc-gen}.

Let us recall the definition of some small uncountable cardinals that we will use. Denote by $\meager$ the family of all meager subsets of $\bairew$ (with the product topology) and let $[\omega]\sp\omega=\{A\subset\omega:A\textrm{ is infinite}\}$. A family $\G\subset[\omega]\sp\omega$ is centered if for every finite subcollection $G_0,\ldots,G_n\in\G$ there is $G\in\G$ with $G\subset G_0\cap\ldots\cap G_n$. A pseudointersection of $\G\subset[\omega]\sp\omega$ is a set $A\in[\omega]\sp\omega$ such that $A\setminus G$ is finite for all $G\in\G$. If $\pair{P,\lhd}$ is a poset, $G\subset P$ is said to be cofinal in $P$ if for every $p\in P$ there exists $q\in G$ with $p\lhd q$. If $f,g\in\bairew$, then $f\leq\sp\ast g$ means that $\{n\in\omega:g(n)<f(n)\}$ is finite.
$$
\begin{array}{rcl}
\covM & = & \min\big\{|\G|:\G\subset\meager,\bairew=\bigcup\G\big\},\nonumber\\
\cofM & = & \min\big\{|\G|:\G\subset\meager\textrm{ is cofinal in }\pair{\meager,\subset}\}\big\},\nonumber\\
\nonM & = & \min\big\{|M|:M\subset\bairew,M\notin\meager\big\},\nonumber\\
\Pint & = & \min\big\{|\G|:\G\subset[\omega]\sp\omega\textrm{ is centered and has no pseudointersection}\big\},\nonumber\\
\d & = & \min\big\{|\G|:\G\subset\bairew\textrm{ cofinal in }\pair{\bairew,\leq\sp\ast}\big\}.\nonumber
\end{array}
$$

See \cite{vd62} for a topological introduction, \cite{blass} for a recent survey in a set-theoretic perspective and \cite{bart} for information on consistency results. In particular, the following relations are known,
$$
\xymatrix{
& \nonM \ar[r] & \cofM \ar[r] & \C\\
& & \d \ar[u] & \\
\omega_1 \ar[r] & \Pint \ar[r] \ar[uu] & \covM \ar[u] & 
}
$$
where each arrow $\kappa\to\tau$ means that $\kappa\leq\tau$. Further, any of the inequalities is consistently strict.

We will assume the terminology of posets such as dense subsets and filters and so on, see \cite[Section 1.4]{bart}. If $\class$ if a class of partially ordered sets and $\kappa$ is a cardinal number, $\MA[\class]{\kappa}$ is the following assertion:
\begin{quote}
If $\poset\in\class$ and $\{D_\alpha:\alpha<\kappa\}$ are dense subsets of $\poset$, there is a filter $G\subset\poset$ that intersects $D_\alpha$ for every $\alpha<\kappa$.
\end{quote}

Let $\ctble$ be the class of countable posets and $\sigmacent$ the class of $\sigma$-centered posets. 

\begin{thm}\label{martins}
\begin{itemize}
\item[(a)] $\kappa<\Pint$ if and only if $\MA[\sigmacent]{\kappa}$. (\cite[14C, p. 25]{fremlin}) 
\item[(b)] $\kappa<\covM$ if and only if $\MA[\ctble]{\kappa}$. (\cite[Theorem 2.4.5]{bart})
\end{itemize}
\end{thm}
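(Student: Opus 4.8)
The plan for both equivalences is the same template: the implication ``forcing axiom $\Rightarrow$ cardinal inequality'' is obtained by applying the axiom to a single well-chosen poset in the class, whose generic filter witnesses directly that the cardinal exceeds $\kappa$; the converse is obtained by reducing an arbitrary poset in the class to a canonical one and re-reading ``a filter meeting $\kappa$ dense sets'' as ``a point avoiding $\kappa$ small sets''. For (b), assume first $\MA[\ctble]{\kappa}$ and suppose toward a contradiction that $\bairew=\bigcup\{M_\alpha:\alpha<\kappa\}$ with each $M_\alpha$ meager; since $\covM$ has the same value for every perfect Polish space we may work in $\csf$ instead, and we may fix closed nowhere dense sets $N_{\alpha,n}$ with $M_\alpha\subseteq\bigcup_n N_{\alpha,n}$. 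Apply $\MA[\ctble]{\kappa}$ to the countable poset $\poset={}\sp{<\omega}2$ ordered by $\supseteq$ against the dense sets $\{s:|s|\geq n\}$ ($n<\omega$) and $\{s:[s]\cap N_{\alpha,n}=\emptyset\}$ ($\alpha<\kappa$, $n<\omega$; these are dense because $N_{\alpha,n}$ has empty interior): the union of a filter meeting all $\omega+\kappa=\kappa$ of them is a point of $\csf$ lying outside every $N_{\alpha,n}$, contradicting the cover. Hence $\kappa<\covM$. For the converse, assume $\kappa<\covM$ and let $\poset$ be countable with dense sets $\{D_\alpha:\alpha<\kappa\}$; one reduces to the case that $\poset$ is separative and atomless (a condition all of whose extensions are pairwise compatible is handled by a principal filter, and passing to the separative quotient transfers the problem), and then $\poset$ is forcing-equivalent to Cohen forcing, so we may again take $\poset={}\sp{<\omega}2$. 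For $x\in\csf$ let $G_x$ be the filter generated by $\{x\restriction n:n<\omega\}$; density of $D_\alpha$ makes $\{x:G_x\cap D_\alpha\neq\emptyset\}=\bigcup\{[q]:q\in D_\alpha\}$ open and dense, so its complement is nowhere dense, in particular meager. As $\kappa<\covM$ these $\kappa$ meager sets do not cover $\csf$, and any $x$ outside their union gives $G_x$ meeting all $D_\alpha$.

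For (a) the direction $\MA[\sigmacent]{\kappa}\Rightarrow\kappa<\Pint$ proceeds in parallel: given a centered $\G\subseteq[\omega]\sp\omega$ with $|\G|\leq\kappa$, take for $\poset$ the Mathias-type poset of pairs $(s,F)$ with $s\in[\omega]\sp{<\omega}$ and $F\in[\G]\sp{<\omega}$, where $(s',F')\leq(s,F)$ means $s\subseteq s'$, $F\subseteq F'$ and $s'\setminus s\subseteq\bigcap F$. Any two conditions with the same first coordinate have the common extension obtained by unioning the second coordinates, so $\poset$ is $\sigma$-centered; the sets $\{(s,F):|s|\geq n\}$ and $\{(s,F):G\in F\}$ are dense (using that $\bigcap F$ is infinite for every finite $F\subseteq\G$, by centeredness of $\G$) and number $\omega+\kappa=\kappa$, and for a filter $H$ meeting all of them the set $\bigcup\{s:(s,F)\in H\}$ is, by directedness of $H$, an infinite pseudointersection of $\G$. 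Thus every centered family of size $\leq\kappa$ has a pseudointersection, i.e.\ $\kappa<\Pint$.

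The remaining direction, $\kappa<\Pint\Rightarrow\MA[\sigmacent]{\kappa}$, is Bell's theorem and is the real obstacle. The plan there is: given $\poset=\bigcup_n\poset_n$ with each $\poset_n$ centered and dense sets $\{D_\alpha:\alpha<\kappa\}$, first intersect $\poset$, the $\poset_n$ and the $D_\alpha$ with an elementary submodel of size $\kappa$ to reduce to $|\poset|\leq\kappa$ (this preserves $\sigma$-centeredness and density, and a filter on the small poset lifts). One then wants a centered $H\subseteq\poset$ meeting every $D_\alpha$. A naive transfinite recursion of length $\kappa$ that at stage $\alpha$ chooses a single condition of $D_\alpha$ compatible with all finitely generated lower bounds of the conditions chosen so far runs into the difficulty that there may be $\kappa$-many such constraints at once, and no single countable-level application of $\MA[\sigmacent]{\omega}$ or of a pseudointersection deals with them. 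Bell's device is to encode the finitary data describing the whole partial construction (which finite tuples of candidate conditions admit lower bounds, carrying which $\poset_n$-labels, meeting which earlier demands) as one centered family in $[\Omega]\sp\omega$ over a countable $\Omega$, and to read off from its pseudointersection --- which exists since the family has size $\leq\kappa<\Pint$ --- a single coherent choice producing the required filter. This coding step, rather than anything in (b) or in the easy halves, is the one I expect to be the main obstacle, which is why here we merely cite \cite{fremlin}; full details for both parts are in \cite{fremlin} and \cite{bart}.
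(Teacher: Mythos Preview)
The paper does not prove this theorem at all: it is stated with citations to \cite[14C, p.~25]{fremlin} and \cite[Theorem 2.4.5]{bart} and then used as a black box. Your sketch therefore goes well beyond what the paper does, and the arguments you give for the three easy directions (both halves of (b) and the direction $\MA[\sigmacent]{\kappa}\Rightarrow\kappa<\Pint$ via the Mathias-type poset) are the standard ones and are correct; your honest deferral of Bell's theorem to \cite{fremlin} is exactly what the paper itself does for the whole statement.

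One small wrinkle worth flagging in your treatment of (b): the passage through the separative quotient is slightly delicate, because the preimage of a filter on the quotient is upward closed and centered in $\poset$ but need not be directed, so it is not literally a filter in the poset sense the statement asks for. The usual fix is either to argue directly on the countable $\poset$ (enumerate it and use a real as an oracle to build a descending sequence, so that missing $D_\alpha$ becomes a meager condition on the oracle), or to note that a centered family meeting all the $D_\alpha$ suffices to generate the desired filter once one is a bit careful. This is a routine point, not a genuine gap.
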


We will use another cardinal invariant defined in \cite{comb_dense_Q}. For every space $X$ we define $\cofnwdX{X}$ to be the smallest $\kappa$ such that there is a family $\G$ of nowhere dense subsets of $X$ such that $|\G|=\kappa$ and cofinal in the poset of nowhere dense subsets of $X$ with respect to inclusion.

\begin{thm}\cite[1.5 and 1.6]{comb_dense_Q}\label{cofnwdrationals}
If $X$ is a crowded separable metrizable space, then $\cofM=\cofnwdX{X}$.
\end{thm}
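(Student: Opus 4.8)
The plan is to reduce the theorem, in two stages, to the classical identity $\cofM=\max\{\d,\nonM\}$.

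\textbf{Stage 1: reduction to $\Q$.} For nowhere dense $A$, the set $\cl{A}$ is the $\subseteq$‑least closed nowhere dense set containing $A$, so $A\mapsto\cl{A}$ is a cofinal map onto the poset $\mathcal C(X)$ of closed nowhere dense subsets of $X$ ordered by $\subseteq$, and the inclusion is a cofinal map back; hence $\cofnwdX{X}=\mathrm{cof}(\mathcal C(X),\subseteq)$. Now fix a countable dense $D\subseteq X$; since $X$ is crowded so is $D$, and therefore $D\cong\Q$ by Sierpi\'nski's theorem. I would show that $\mathcal C(X)$ and $\mathcal C(D)$ have equal cofinality by producing Tukey morphisms in both directions. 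One is easy: $L\mapsto\cl{L}$ and $K\mapsto K\cap D$ land in the right posets because $D$ is dense, and $\cl{L}\subseteq K$ trivially gives $L\subseteq K\cap D$. For the other direction the restriction map $K\mapsto K\cap D$ is useless (a closed nowhere dense $K$ can be disjoint from $D$); instead I fix a metric $\rho$ on $X$, a countable dense $\{k_i:i\in\omega\}$ of $K$, points $d_{i,m}\in D$ with $\rho(d_{i,m},k_i)<2^{-m}$, and set $N_K=\{d_{i,m}:i\le m\}$. Since every limit point of $N_K$ lies in the nowhere dense set $K$, one checks that $N_K$ is nowhere dense in $D$ while $\cl{N_K}\supseteq K$; so $K\mapsto\cl[D]{N_K}$ together with $L\mapsto\cl{L}$ works, because $\cl[D]{N_K}\subseteq L$ forces $K\subseteq\cl{N_K}\subseteq\cl{L}$. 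Thus $\cofnwdX{X}=\cofnwdX{\Q}$ for every crowded separable metrizable $X$, and applying this also to $X=\csf$ reduces the theorem to the equality $\mathrm{cof}(\mathcal C(\csf),\subseteq)=\cofM$.

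\textbf{Stage 2: the Cantor set.} Identify a closed nowhere dense $K\subseteq\csf$ with the tree $T_K=\{s\in{}\sp{<\omega}2:[s]\cap K\ne\emptyset\}$ (where $[s]=\{x\in\csf:s\subseteq x\}$); nowhere density means every node has an extension $s$ with $[s]\cap K=\emptyset$. Attach to $K$ two data: its \emph{thinning rate} $f_K(n)$, the least $m$ such that every $s\in{}\sp{n}2$ has an extension of length $\le m$ disjoint from $K$ (finite, by compactness of ${}\sp{n}2$), which is $\subseteq$‑monotone; and its \emph{localization data}, recording which extensions witness the rate. For $\cofM\le\mathrm{cof}(\mathcal C(\csf),\subseteq)$: given a cofinal $\mathcal K$, for each strictly increasing $g\in\bairew$ the set $K_g$ consisting of those $y\in\csf$ not constantly $0$ on any block of the interval partition $a_0=0,\ a_{n+1}=g(a_n)$ is closed, nowhere dense, and has thinning rate dominating $g$; choosing $K\in\mathcal K$ with $K_g\subseteq K$ and using monotonicity shows $\{f_K:K\in\mathcal K\}$ is dominating, so $\d\le|\mathcal K|$. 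A parallel argument — now feeding in closed nowhere dense sets carrying prescribed patterns along a fixed block partition and extracting localization data — together with the characterization $\nonM=\min\{|A|:A\subseteq\bairew,\ \forall g\ \exists f\in A\ \exists^{\infty}n\ f(n)=g(n)\}$, gives $\nonM\le|\mathcal K|$; hence $\mathrm{cof}(\mathcal C(\csf),\subseteq)\ge\max\{\d,\nonM\}=\cofM$. For the reverse, fix a dominating family $\{g_\alpha:\alpha<\d\}$ and a family $\{z_\beta:\beta<\nonM\}$ witnessing $\nonM$, and from this data build a family of closed nowhere dense subsets of $\csf$, of size $\d\cdot\nonM=\cofM$, which is cofinal: a given closed nowhere dense set sits inside one of them because its thinning rate is dominated by some $g_\alpha$ and its localization data is captured by some $z_\beta$. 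This last verification is precisely the Bartoszy\'nski‑style analysis on which the identity $\cofM=\max\{\d,\nonM\}$ itself rests (see \cite{bart}).

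\textbf{Main obstacle.} The crux is the cofinality verification in the reverse inequality of Stage 2. There is no soft route through a Tukey equivalence with the meager ideal, because $\mathrm{nwd}(X)$ is not even a $\sigma$‑ideal: indeed $\mathrm{add}(\mathrm{nwd}(X))=\aleph_0$, since a countable discrete subset is nowhere dense but countable unions of such need not be, so $\mathrm{nwd}(X)$ is not Tukey‑equivalent to $\meager$, and the agreement of \emph{cofinalities} must be squeezed out of the genuinely two‑parameter coding of a closed nowhere dense subset of $\csf$ by a dominating datum and a localization datum — which is exactly the content of $\cofM=\max\{\d,\nonM\}$. In particular, the tempting one‑parameter candidates for a cofinal family (e.g.\ complements of dense open sets of a single simple combinatorial shape) do not suffice. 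A smaller, already noted subtlety is that Stage 1 must bypass the useless restriction map in favour of the metric approximation $N_K$.
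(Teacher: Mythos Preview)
The paper does not prove this statement at all: Theorem~\ref{cofnwdrationals} is quoted verbatim from \cite[1.5 and 1.6]{comb_dense_Q} and used as a black box, so there is no ``paper's own proof'' to compare against.

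As for your outline itself, it is essentially the argument of \cite{comb_dense_Q}. Stage~1 is correct; your metric-approximation set $N_K$ does the job (its limit points lie in $K$, so $N_K$ is closed discrete in the crowded open set $X\setminus K$, hence $N_K\cup K$ is closed nowhere dense in $X$ and contains $K$), and the two Tukey pairs you write down give $\mathrm{cof}(\mathcal C(X))=\mathrm{cof}(\mathcal C(D))$. One cosmetic point: your formula ``$a_{n+1}=g(a_n)$ has thinning rate dominating $g$'' is not literally true; what you actually need (and can get) is that for any target $h$ one can choose the block partition so that $f_{K_g}\geq h$, and then monotonicity of $f_{\,\cdot\,}$ under $\subseteq$ finishes the $\d$-bound. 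The $\nonM$-bound and the reverse inequality in Stage~2 you leave at the level of ``Bartoszy\'nski-style analysis'', which is honest: that is exactly where the work in \cite{comb_dense_Q} sits, and it is not a two-line verification. So your proposal is a faithful sketch of the literature proof rather than a different route, with the substantive combinatorics of Stage~2 deferred to \cite{bart} and \cite{comb_dense_Q}.
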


\section{Character of remote filters}

In order to construct examples for Question \ref{thequestion}, we will first ask what is the character of the smallest remote filter in $\csfo$. For a Tychonoff non-compact space $X$, we define $\remcard(X)$ to be the minimal $\kappa$ such that there exists a remote set $F$ in $X$ closed in $\beta X$ and with $\chi(F)=\kappa$; if no remote filters exist, $\remcard(X)=\infty$. $\remcard$ will denote $\remcard(\Q)$.

\begin{propo}
If $Y$ is a dense subset of a Tychonoff space $X$, then $\remcard(Y)\leq\remcard(X)$
\end{propo}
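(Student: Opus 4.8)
The plan is to exploit the canonical homeomorphism between $\beta X$ and $\beta Y$ that holds whenever $Y$ is dense in $X$. First I would recall that for $Y$ dense in the Tychonoff space $X$ we have $\cl[\beta Y]{Y} = \beta Y$ and, since $Y$ is also dense in $\beta X$, the identity map on $Y$ extends to a homeomorphism $h\colon\beta Y\to\cl[\beta X]{Y}=\beta X$; in particular $h$ restricts to the identity on $Y$ and carries $Y\sp\ast$ onto $X\sp\ast$. Under this identification I will simply regard $\beta Y$ and $\beta X$ as the same space and show that any remote set of $X$ of character $\kappa$ is also a remote set of $Y$ of character at most $\kappa$, which immediately gives $\remcard(Y)\leq\remcard(X)$ (the statement being vacuous when $\remcard(X)=\infty$).

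So fix a remote set $F$ of $X$, closed in $\beta X$, with $\chi(F)=\kappa$, and view $F\subseteq\beta Y$. I claim $F$ is a remote set of $Y$. Let $N\subseteq Y$ be nowhere dense in $Y$. The key observation is that $\cl[X]{N}$ is nowhere dense in $X$: indeed if $\cl[X]{N}$ contained a nonempty open set $U\subseteq X$, then since $Y$ is dense $U\cap Y$ would be a nonempty open subset of $Y$ contained in $\cl[X]{N}\cap Y=\cl[Y]{N}$ (the last equality because $N\subseteq Y$), contradicting that $N$ is nowhere dense in $Y$. Now $\cl[\beta Y]{N}=\cl[\beta X]{N}\subseteq\cl[\beta X]{\cl[X]{N}}$, and since $\cl[X]{N}$ is nowhere dense in $X$ and $F$ is remote in $X$ we get $F\cap\cl[\beta X]{\cl[X]{N}}=\emptyset$, hence $F\cap\cl[\beta Y]{N}=\emptyset$. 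Thus $F$ is remote in $Y$.

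It remains to handle the character. Since $F$ is closed in $\beta X$ and the underlying space is unchanged, $F$ is closed in $\beta Y$ with the same neighborhood filter, so $\chi(F)$ computed in $\beta Y$ equals $\chi(F)$ computed in $\beta X$, namely $\kappa$. Therefore $\remcard(Y)\leq\kappa=\remcard(X)$, as required. The only mildly delicate point is the passage from a nowhere dense $N$ in $Y$ to a nowhere dense set in $X$ — one must be slightly careful that nowhere density is not automatically inherited upward along a dense embedding for arbitrary subsets, but taking the $X$-closure of $N$ repairs this, as shown above; everything else is a routine unwinding of the fact that dense subspaces share a \v Cech--Stone compactification.
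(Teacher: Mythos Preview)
Your argument has a genuine gap: the ``canonical homeomorphism between $\beta X$ and $\beta Y$'' that you invoke simply does not exist in general. A dense subspace $Y$ of a Tychonoff space $X$ satisfies $\beta Y\cong\beta X$ only when $Y$ is $C^\ast$-embedded in $X$, which is a strong additional hypothesis. For a concrete counterexample, take $Y=\omega$ dense in its one-point compactification $X=\omega+1$; then $\beta X=\omega+1$ is countable while $\beta Y=\beta\omega$ has cardinality $2^{\mathfrak{c}}$. Or take $Y=\mathbb{Q}$ dense in $X=[0,1]$: here $\beta X=[0,1]$ while $\beta\mathbb{Q}$ is a non-metrizable space of weight $\mathfrak{c}$. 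So your identification of the two compactifications, and with it the claim that $F$ sits unchanged inside $\beta Y$ with the same character, collapses.

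What \emph{does} always exist is the continuous map $f:\beta Y\to\beta X$ extending the inclusion $Y\hookrightarrow X$, and the paper's proof works with exactly this map: one pulls $F$ back to $G=f^{\leftarrow}[F]\subset\beta Y$, obtains $\chi(G)\leq\chi(F)$ by pulling back a witnessing family of open sets, and then checks remoteness of $G$. Your observation that $\cl[X]{N}$ is nowhere dense in $X$ whenever $N$ is nowhere dense in $Y$ is correct and is precisely the ingredient needed for that last step; it is only the identification of the ambient compactifications that fails. Replacing your homeomorphism by the map $f$ and taking preimages rather than literal intersections repairs the argument and recovers the paper's proof.
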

\begin{proof}
Let $f:\beta Y\to\beta X$ be the unique continuous function such that $f\!\!\restriction_{Y}:Y\to X$ is the inclusion. Let $F\subset\beta X$ be a remote closed subset and let $\U$ be a collection of open subsets of $\beta X$ whose intersection is $F$. Then $G=f\sp\leftarrow[F]$ is a closed subset of $\beta Y$ and $\{f\sp\leftarrow[U]:U\in\U\}$ witnesses that $\chi(G)\leq\chi(F)$. So it remains to show that $G$ is remote.

Assume that $N\subset Y$ is a closed nowhere dense subset of $Y$ and there is $p\in\cl[\beta Y]{N}\cap G$. Then $M=\cl[X]{N}$ is nowhere dense in $X$. From the fact that $f$ is continuous, it follows that $f[\cl[\beta Y]{N}]\subset\cl[\beta X]{f[N]}=\cl[\beta X]{M}$. Thus, $f(p)\in\cl[\beta X]{M}\cap F$, which is a contradiction. Then $G$ is remote and the result follows. 
\end{proof}

Since every crowded, separable and metrizable space has a dense subset homeomorphic to $\Q$, we obtain the following.

\begin{coro}\label{coroQ}
If $X$ is non-compact, crowded, separable and metrizable, then $\remcard\leq\remcard(X)$.
\end{coro}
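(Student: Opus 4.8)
The plan is to read this off from the previous Proposition once we have produced, inside an arbitrary non-compact crowded separable metrizable space $X$, a dense subset homeomorphic to $\Q$. So the first step is to exhibit such a dense copy of $\Q$. Since $X$ is separable, fix a countable base $\{U_n:n<\omega\}$ of nonempty open sets, and build a countable set $Y=\{y_n:n<\omega\}\subseteq X$ by recursion, interleaving two families of tasks: for each $n$, eventually put a point of $Y$ into $U_n$ (this forces $Y$ dense and uses only $U_n\neq\emptyset$); and for each previously chosen $y_k$ and each $j<\omega$, eventually put into $Y$ a point distinct from $y_k$ and within distance $1/j$ of $y_k$ (possible because $X$ is metrizable and crowded, so $y_k$ is not isolated). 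Each single task is trivially achievable and there are only countably many, so the recursion goes through; the resulting $Y$ is a countable crowded metrizable space, hence homeomorphic to $\Q$ by Sierpi\'nski's theorem, and dense in $X$ by construction.

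The second step is to note that $\remcard(\cdot)$ is a topological invariant, since it is defined entirely from the \v Cech-Stone compactification, the characters of its closed subsets, and the nowhere dense subsets of the space, all of which are preserved by a homeomorphism (and $\beta$ carries a homeomorphism $Y\to\Q$ to a homeomorphism $\beta Y\to\beta\Q$ matching up the respective remote closed sets and their characters). In particular $Y$ is non-compact, so $\remcard(Y)$ is well defined, and $\remcard(Y)=\remcard(\Q)=\remcard$.

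Finally, apply the previous Proposition to the dense inclusion $Y\subseteq X$, which is legitimate because $X$, being metrizable, is Tychonoff; this gives $\remcard(Y)\leq\remcard(X)$, and substituting $\remcard(Y)=\remcard$ yields $\remcard\leq\remcard(X)$, as claimed. I do not expect a genuine obstacle here: the only mildly technical ingredient is the recursive construction of the dense copy of $\Q$, which is entirely routine, so in the actual write-up I would just invoke — exactly as the sentence preceding the statement does — the standard fact that every crowded separable metrizable space has a dense subset homeomorphic to $\Q$, and then run the two-line argument above.
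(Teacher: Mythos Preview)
Your proposal is correct and follows exactly the route the paper takes: invoke the previous Proposition with a dense copy of $\Q$ inside $X$, then use that $\remcard$ is a topological invariant. The paper's ``proof'' is just the single sentence preceding the corollary, citing the standard fact that every crowded separable metrizable space contains a dense copy of $\Q$; your recursive construction spells this out, but as you yourself note, in the write-up one would simply quote the fact and give the two-line argument.
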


If $X$ and $Y$ are both non-compact, crowded, separable and metrizable metrizable spaces, then by Theorem \ref{cofnwdrationals}, their ideals of nowhere dense sets behave in a similar fashion. Thus, it is conceivable that in order to define a remote filter, one needs to avoids the same quantity of nowhere dense sets in both cases and then $\remcard(X)=\remcard(Y)$. However, the author was unable to prove this so we will leave it as an open problem, see Question \ref{Qvssepmet} below.

Let us now show how to modify van Douwen's proof of existence of remote points (\cite{vd51}) to prove the existence of remote points with consistently small character. The proof is practically the same modulo some small changes. However, for the reader's convenience and in order to make the changes explicit, we include a full proof.

\begin{propo}
If $X$ is a non-compact, crowded, separable metrizable space, there is a non-empty remote closed subset $F$ of $X$ with $\chi(F)\leq\cofM$.
\end{propo}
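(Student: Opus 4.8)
The plan is to adapt van Douwen's construction of remote points, enumerating nowhere dense subsets of $X$ along a cofinal family in $\pair{\nwd(X),\subset}$ of size $\cofM$ (which exists and has the stated cardinality by Theorems \ref{cofnwdrationals} and the definition of $\cofnwdX{X}$, since $X$ is crowded separable metrizable), and building a decreasing chain of clopen (or regular open) sets that avoids each of them while keeping the chain length $\cofM$, so that the resulting filter has character $\leq\cofM$. First I would fix a countable base $\{B_n:n<\omega\}$ of nonempty open sets for $X$ closed under the relevant operations, and a cofinal family $\{N_\alpha:\alpha<\cofM\}$ of closed nowhere dense subsets of $X$; since the $N_\alpha$ are cofinal, a closed set $F\subset\beta X$ is remote iff $F\cap\cl[\beta X]{N_\alpha}=\emptyset$ for every $\alpha$, so it suffices to defeat the $N_\alpha$ one at a time.

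Next I would carry out the recursion. At stage $\alpha<\cofM$ I want to produce an open (better: cozero or clopen, to stay inside $\beta X$ nicely) set $U_\alpha\subset X$ with $\cl[\beta X]{U_\alpha}\cap X\sp\ast\neq\emptyset$, with $U_\beta\supset\sp\ast U_\alpha$ (equivalently $\cl[\beta X]{U_\alpha}\cap X\sp\ast\subset\cl[\beta X]{U_\beta}\cap X\sp\ast$) for $\beta<\alpha$, and with $\cl[\beta X]{U_\alpha}\cap\cl[\beta X]{N_\alpha}\cap X\sp\ast=\emptyset$. The point where van Douwen's argument is used verbatim is the single step: given a ``large'' open set $V$ (one whose trace on $X\sp\ast$ is nonempty, guaranteed because $V$ meets infinitely many $B_n$'s in a suitable way) and a nowhere dense $N$, one shrinks $V$ inside itself to an open $V'$ that still has nonempty trace on $X\sp\ast$ but whose closure misses $N$; this is done by choosing, inside each relevant basic piece $B_n\cap V$, a smaller basic open set whose closure avoids $N$ (possible since $N$ is nowhere dense), and taking the union — van Douwen's combinatorial bookkeeping with an independent or tree-like family ensures the trace on $X\sp\ast$ survives. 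I would import that lemma as a black box and only note that the construction produces a single set depending on the finitely-or-countably much data used so far.

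The one genuinely new point, and the main obstacle, is controlling the \emph{length} of the chain: van Douwen's proof naturally runs for $\C$ (or $\nonM$, depending on the version) steps because he enumerates all nowhere dense sets, whereas I must run for only $\cofM$ steps, which forces me to take limits correctly. At a limit stage $\alpha$ of cofinality $\omega$ I would use the fact that a countable decreasing-mod-compact chain of ``large'' clopen sets has a pseudointersection that is still large (this is where separability/metrizability and the structure of $\beta X$ enter — essentially a $\sigma$-complete-ness statement for the relevant quotient, or one can diagonalize through the countable base); at a limit stage of uncountable cofinality one must argue the intersection of the closures is still nonempty and still avoids the $N_\beta$ for $\beta<\alpha$, which follows because each $N_\beta$ was already defeated at stage $\beta$ and the property $\cl[\beta X]{U_\gamma}\cap\cl[\beta X]{N_\beta}\cap X\sp\ast=\emptyset$ is preserved downward along the chain. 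Finally I would set $F=\bigcap_{\alpha<\cofM}\cl[\beta X]{U_\alpha}\cap X\sp\ast$ (or the corresponding filter generated by $\{U_\alpha:\alpha<\cofM\}$), observe $F\neq\emptyset$ by compactness, observe $\chi(F)\leq\cofM$ since $\{\Ex{U_\alpha}:\alpha<\cofM\}$ (together with $\beta X\setminus X$-type neighborhoods) witnesses it, and observe $F$ is remote since it misses every $\cl[\beta X]{N_\alpha}$ and these are cofinal among closures of nowhere dense sets. I would close by remarking that the only deviation from \cite{vd51} is this replacement of the enumeration of \emph{all} nowhere dense sets by a \emph{cofinal} subfamily, together with the verification that the recursion survives the shorter limit stages.
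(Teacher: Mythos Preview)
There is a genuine gap at limit stages of uncountable cofinality. You say that at such a stage $\alpha$ one must ``argue the intersection of the closures is still nonempty and still avoids the $N_\beta$'', but this does not produce the open set $U_\alpha\subset X$ your recursion promised: to continue to stage $\alpha+1$ you need a single open $U_\alpha$ with $U_\alpha\subset\sp\ast U_\beta$ for every $\beta<\alpha$, i.e.\ a pseudointersection of the chain $\{U_\beta:\beta<\alpha\}$ that still has non-compact closure. When the cofinality of $\alpha$ is at least $\Pint$ (which can certainly occur once $\Pint<\cofM$, a consistent situation), no such pseudointersection need exist. Your decreasing-chain strategy is essentially the argument of Theorem~\ref{filterlinearlyordered} in the paper, and there the extra hypothesis $\Pint=\cofM$ is exactly what rescues the limit stages; without it the recursion dies.

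The paper's proof sidesteps this completely: it builds \emph{no} chain and performs \emph{no} transfinite recursion. For each $N$ in the cofinal family $\nwd$ it defines, independently and uniformly, an open set $U(N)$ (using a fixed discrete family $\{I_n:n<\omega\}$, a fixed base $\{B_n:n<\omega\}$, and van Douwen's bookkeeping functions $k(N,n,m)$), and then verifies directly that $\{U(N):N\in\nwd\}$ has the finite intersection property. The combinatorics of $k(N,n,m)$ are arranged precisely so that for any finite list $N_0,\ldots,N_{n-1}$ the sets $U(N_0,n),\ldots,U(N_{n-1},n)$ already intersect inside the single piece $I_n$. So the ``black box'' you wanted to import is not a one-step shrinking lemma to be iterated transfinitely; it is a uniform assignment $N\mapsto U(N)$ engineered so that the finite intersection property holds automatically, and that is the idea your proposal is missing.
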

\begin{proof}
According to Theorem \ref{cofnwdrationals}, there exists a collection $\nwd$ of $\cofM$ closed nowhere dense subsets of $X$ such that every nowhere dense subset of $X$ is contained in some nowhere dense subset of $\nwd$.

Let $\{I_n:n<\omega\}$ be a discrete family of non-empty open subsets of $X$ and let $\{B_n:n<\omega\}$ be a base of non-empty open subsets of $X$. For each $N\in\nwd$, let 
$$
K(N,n)=\{i<\omega:\cl{B_i}\subset I_n\setminus N\}.
$$
Now, recursively define 
\begin{eqnarray}
k(N,n,0)&=&\min K(N,n)\nonumber\\
k(N,n,m+1)&=&\min\{i<\omega:i\geq k(N,n,m),\textrm{ and for each }s\leq k(N,n,m)\textrm{ with}\nonumber\\
& &\cl{B_s}\subset I_n,\textrm{ there is }t\in K(N,n)\textrm{ with }t\leq i,B_t\subset B_s\}.\nonumber
\end{eqnarray}
And finally, let
$$
U(N,n)=\bigcup\{B_i:i\in K(N,n),i\leq k(N,n,n)\}.
$$
Notice that $U(N,n)$ is a non-empty open set. Since $U(N,n)$ is a finite union of subsets indexed in $K(N,n)$, we obtain that $\cl{U(N,n)}\subset I_n\setminus N$.

Let $U(N)=\bigcup\{U(N,n):n<\omega\}$ for each $N\in\nwd$, notice that $\cl{U(N)}\cap N=\emptyset$. Define $F=\bigcap\{\cl[\beta X]{U(N)}:N\in\nwd\}$.

To prove that $F\neq\emptyset$, by compactness it is enough to show that if $N_0,\ldots,N_{n-1}$ are in $\nwd$ then $U(N_0,n)\cap U(N_1,n)\cap\ldots\cap U(N_{n-1},n)\neq\emptyset$. By rearranging if necessary, we may assume that whenever $r\leq s<n$ we have $k(N_r,n,r)\leq k(N_s,n,r)$. By recursion, define $t(0)=k(N_0,n,0)$ and $t(j+1)=\min\{s\in K(N_{j+1},n):B_s\subset B_{t(j)}\}$ for $j<n$, this is clearly well-defined. By recursion it is easy to check that $t(j)\leq k(D_j,n,j)$ for each $j<n$. Then it follows that $U(N_0,n)\cap U(N_1,n)\cap\ldots\cap U(N_{n-1},n)\supset B_{t(n-1)}$ which is non-emtpy.

By the choice of $\nwd$ it follows that $F$ is a non-empty remote closed set of $X$. We still have to prove that $\chi(F)\leq\covM$. It is enough to argue that 
$$
(\ast)\ F=\bigcap\{\Ex{U(N)}:N\in\nwd\}.
$$

Fix $N\in\nwd$, we know that $F\subset\cl[\beta X]{U(N)}$. According to \cite[Lemma 3.1]{vd51}, $\cl[\beta X]{U(N)}=\cl[\beta X]{\Ex{U(N)}}=\Ex{U(N)}\cup\bd[\beta X]{\Ex{U}}$. Moreover, by \cite[Lemma 3.2]{vd51}, $\bd[\beta X]{\Ex{U}}=\cl[\beta X]{\bd{U}}$. Since $F$ is remote, $F\cap\cl[\beta X]{\bd{U}}=\emptyset$. Thus, $F\subset\Ex{U(N)}$. The other inclusion in $(\ast)$ is clear by the definition.
\end{proof}

We also give lower bounds for the character of remote filters.

\begin{thm}\label{geqcovM}
Let $F$ be a non-empty closed remote subset in a non-compact, crowded, separable metrizable space $X$. Then $\chi(F)\geq\covM$.
\end{thm}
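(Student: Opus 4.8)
I would argue by contradiction: assuming $\chi(F)=\lambda<\covM$, so that $\MA[\ctble]{\lambda}$ holds by Theorem~\ref{martins}(b), I would use this instance of Martin's axiom to produce a discrete $D\subseteq X$ whose closure is nowhere dense but which meets cofinally many ``neighbourhoods of $F$ at infinity'', contradicting that $F$ is remote. Fix a countable dense $Q\subseteq X$ and a countable base $\mathcal B$ of nonempty open subsets of $X$. Since $F\subseteq X^{\ast}$ (no point of $X$ is remote, a singleton being nowhere dense in the crowded space $X$) while a nonempty clopen subset of $\beta X$ meets the dense set $X$, the set $F$ is not clopen, so $\lambda\geq\omega$; hence we may fix a base $\{W_\alpha:\alpha<\lambda\}$ of open neighbourhoods of $F$ in $\beta X$, closed under finite intersections, with $\bigcap_{\alpha<\lambda}\cl[\beta X]{W_\alpha}=F$ (shrink each member using normality of $\beta X$). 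Put $G_\alpha=W_\alpha\cap X$; each $G_\alpha$ is nonempty and open in $X$, the family $\{G_\alpha\}$ is downward directed by inclusion, and $\cl[\beta X]{G_\alpha}=\cl[\beta X]{W_\alpha}\supseteq F$.

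Two facts will be needed. \emph{(a)} For every nowhere dense $N\subseteq X$ there is $\alpha<\lambda$ with $N\cap G_\alpha=\emptyset$: since $F$ is remote, $\cl[\beta X]{N}\cap F=\emptyset$, and as $F=\bigcap_\alpha\cl[\beta X]{G_\alpha}$ is a downward directed intersection of compacta, already $\cl[\beta X]{N}\cap\cl[\beta X]{G_\alpha}=\emptyset$ for some single $\alpha$, whence $N\cap G_\alpha=\emptyset$. \emph{(b)} For every $x\in X$ and every neighbourhood $O$ of $x$ in $X$ there is $V\in\mathcal B$ with $x\in V\subseteq O$ and $\cl[\beta X]{V}\cap F=\emptyset$: indeed $\bigcap\{\cl[\beta X]{U}:U\text{ a neighbourhood of }x\text{ in }X\}=\{x\}$, because given $p\in\beta X\setminus\{x\}$ one can use regularity of $\beta X$ to pick an open $P\ni x$ with $p\notin\cl[\beta X]{P}$, and then $U=P\cap X$ satisfies $\cl[\beta X]{U}=\cl[\beta X]{P}\not\ni p$; since this is a downward directed family of compacta meeting the compactum $F$ in $\emptyset$, some neighbourhood $U_0$ of $x$ has $\cl[\beta X]{U_0}\cap F=\emptyset$, and any $V\in\mathcal B$ with $x\in V\subseteq O\cap U_0$ works.

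Now let $\poset$ be the (countable) poset of all finite sequences $p=\langle(x_0,V_0),\dots,(x_{k-1},V_{k-1})\rangle$ with $x_i\in Q$, $V_i\in\mathcal B$, $x_i\in V_i$, the $V_i$ pairwise disjoint, and $\cl[\beta X]{V_i}\cap F=\emptyset$ for $i<k$, ordered by end-extension; for $\alpha<\lambda$ set $E_\alpha=\{p\in\poset:x^p_i\in G_\alpha\text{ for some }i<\mathrm{len}(p)\}$. The crucial step is that every $E_\alpha$ is dense, and this is where remoteness re-enters: given $p\in\poset$, the finite union $\bigcup_{i<k}\cl[\beta X]{V^p_i}$ is disjoint from $F$, whereas $\cl[\beta X]{G_\alpha}\supseteq F\neq\emptyset$, so $G_\alpha\not\subseteq\bigcup_{i<k}V^p_i$; choose $x_k\in Q\cap(G_\alpha\setminus\bigcup_{i<k}V^p_i)$ and, by \emph{(b)}, some $V_k\in\mathcal B$ with $x_k\in V_k\subseteq G_\alpha\setminus\bigcup_{i<k}V^p_i$ and $\cl[\beta X]{V_k}\cap F=\emptyset$; then $p{}^\frown\langle(x_k,V_k)\rangle\in E_\alpha$ extends $p$.

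Finally, applying $\MA[\ctble]{\lambda}$ to $\poset$ and the dense sets $\{E_\alpha:\alpha<\lambda\}$ yields a filter $H$ meeting all of them; set $D=\{x^p_i:p\in H,\ i<\mathrm{len}(p)\}$. Compatibility of the conditions in $H$ together with pairwise disjointness of the $V_i$'s makes each point of $D$ isolated in $D$: if $x^q_j\in V^p_i$ with $x^q_j\neq x^p_i$, a common extension $r\in H$ of $p$ and $q$ would force $V^p_i\cap V^q_j=\emptyset$, a contradiction; so $V^p_i\cap D=\{x^p_i\}$. Thus $D$ is discrete and hence $\cl{D}$ is nowhere dense in $X$, while $\cl{D}\cap G_\alpha\supseteq D\cap G_\alpha\neq\emptyset$ for every $\alpha<\lambda$ because $H$ meets $E_\alpha$, contradicting \emph{(a)}. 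Therefore $\chi(F)\geq\covM$. The step demanding the most care is the density of the sets $E_\alpha$ --- that is, guaranteeing that the finitely many closures $\cl[\beta X]{V_i}$ attached to a condition can never swallow an entire $G_\alpha$ --- and fact \emph{(b)}, asserting that points of $X$ have arbitrarily small neighbourhoods whose $\beta X$-closure avoids $F$, is precisely what rules this out; everything else (keeping $\poset$ countable, the bookkeeping in \emph{(a)} and \emph{(b)}) is routine.
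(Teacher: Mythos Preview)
Your argument is correct and shares the paper's core idea: use $\MA[\ctble]{\lambda}$ (Theorem~\ref{martins}(b)) to manufacture a discrete $D\subset X$ that meets the $X$-trace of every member of a neighbourhood base of $F$, and conclude from remoteness (equivalently, farness) that this is impossible. The implementations differ. The paper first exploits Lindel\"ofness to stratify $X$ into pairwise disjoint open ``bands'' $W_n=V_n\setminus\cl{V_{n+1}}$ with $F\subset\Ex{V_n}$, fixes a countable dense set in each band, and forces with $({}^{<\omega}\omega,\supset)$ to select one point per band; discreteness (indeed, closed discreteness) of the resulting set is then automatic from the band structure, and the dense sets $D(\alpha,n)$ only have to record which band--index pair lands in $U_\alpha$. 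You bypass the stratification and force directly with finite sequences of pairs $(x_i,V_i)$, building discreteness into the conditions via the isolating neighbourhoods $V_i$; the price is that you must verify fact~\emph{(b)} (points of $X$ have small neighbourhoods whose $\beta X$-closure misses $F$) to keep the poset nontrivially extendable. The paper's route is more concrete and yields a closed discrete witness with no extra work; yours is carried out entirely in terms of the neighbourhood filter of $F$ in $\beta X$ and needs no preliminary decomposition of $X$.

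One small wrinkle in your density argument: from ``$\bigcup_{i<k}\cl[\beta X]{V^p_i}$ misses $F$ while $\cl[\beta X]{G_\alpha}\supseteq F$'' you should conclude that the \emph{open} set $G_\alpha\setminus\bigcup_{i<k}\cl{V^p_i}$ is nonempty (pass through the open complement in $\beta X$ of $\bigcup_i\cl[\beta X]{V^p_i}$, which meets $\cl[\beta X]{G_\alpha}$ and hence $G_\alpha$), and choose $x_k$ and $V_k$ there; the set $G_\alpha\setminus\bigcup_{i<k}V^p_i$ you wrote need not be open, so density of $Q$ and fact~\emph{(b)} do not apply to it directly. This is a cosmetic fix and costs nothing.
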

\begin{proof}
Assume that there is a collection $\{U_\alpha:\alpha<\kappa\}$ of open subsets of $\beta X$ with intersection equal to $F$ such that $\kappa<\covM$. We will derive a contradiction.

Since $X$ is Lindel\"of, there is a countable collection $\{V_n:n<\omega\}$ of non-empty open subsets of $X$ such that $F\subset\Ex{V_n}$ and $\cl{V_{n+1}}\subset V_n$ for each $n<\omega$. For each $n<\omega$, let $W_n=V_n\setminus\cl{V_{n+1}}$, then $\{W_n:n<\omega\}$ is a collection of non-empty crowded open subsets of $X$.

For each $n<\omega$, let $\{x(n,m):m<\omega\}$ be a countable dense subset of $W_n$. For each $\alpha<\kappa$, there is an infinite set $E_\alpha\subset\omega$ and a function $f_\alpha:E_\alpha\to\omega$ such that $x(n,f_\alpha(n))\in U_\alpha$ for each $n\in E_\alpha$.

The poset $\poset=({}\sp{<\omega}{\omega},\supset)$ with the inclusion order is countable. For each $\alpha<\kappa$ and $n<\omega$, let
$$
D(\alpha,n)=\{p\in\poset:\exists\ \! m>n\ (m\in{dom}(p)\cap E_\alpha,\ p(m)=f_\alpha(m))\}.
$$
It is not hard to see that $D(\alpha,n)$ is a dense subset of $\poset$. Since $\kappa<\covM$, by Theorem \ref{martins} there exists a filter $G$ that intersects $D(\alpha,n)$ for each $\alpha<\kappa$ and $n<\omega$. Let $f=\bigcup G$. Then it is not hard to show that $f:\omega\to\omega$ is a function such that for each $\alpha<\kappa$, $\{n\in E_\alpha:f(n)=f_\alpha(n)\}$ is non-empty.

Then $D=\{x(n,f(n)):n<\omega\}$ is a closed discrete subset of $X$. So $\cl[\beta X]{D}$ is closed, non-empty and disjoint from $F$ because $F$ is remote. Then there exists $\alpha<\kappa$ such that $U_\alpha\cap \cl[\beta X]{D}=\emptyset$. But then by the definition of $f$, $\{n<\omega:x(n,f(n))\in U_\alpha\}$ is non-empty so $U_\alpha\cap \cl[\beta X]{D}\neq\emptyset$. This is a contradiction so in fact $\chi(F)\geq\covM$.
\end{proof}

\begin{thm}
Let $F$ be a non-empty closed remote subset in a non-compact, crowded, separable metrizable space $X$. Then $\chi(F)\geq\min\{\nonM,\d\}$.
\end{thm}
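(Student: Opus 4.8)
The plan is to argue by contradiction, in parallel with the proof of Theorem~\ref{geqcovM}. Suppose $\chi(F)=\kappa<\min\{\nonM,\d\}$. Since character and pseudocharacter agree for closed subsets of the compact space $\beta X$, fix a genuine neighborhood base $\{U_\alpha:\alpha<\kappa\}$ of $F$ in $\beta X$; after refining (using that $\beta X$ is compact Hausdorff, hence normal) we may also assume that it is closed under finite intersections and that each $U_\alpha$ contains $\cl[\beta X]{U_\beta}$ for some $\beta$. Reuse the geometric scaffolding of the proof of Theorem~\ref{geqcovM}: a decreasing sequence $(V_n)$ of non-empty open subsets of $X$ with $F\subseteq\Ex{V_n}$ and $\cl{V_{n+1}}\subseteq V_n$, the non-empty crowded open layers $W_n=V_n\setminus\cl{V_{n+1}}$, a countable dense set $\{x(n,m):m<\omega\}\subseteq W_n$ for each $n$, and, for each $\alpha$, an infinite $E_\alpha\subseteq\omega$ with a map $f_\alpha\colon E_\alpha\to\omega$ such that $x(n,f_\alpha(n))\in U_\alpha$ whenever $n\in E_\alpha$; concretely one may take $E_\alpha=\{n:U_\alpha\cap W_n\neq\emptyset\}$ and $f_\alpha(n)=\min\{m:x(n,m)\in U_\alpha\}$. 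The goal is to produce a single nondecreasing $g\in\bairew$ with the property that for every $\alpha<\kappa$ there is $n\in E_\alpha$ satisfying $f_\alpha(n)\le g(n)$.

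Granting such a $g$, set $D=\{x(n,m):n<\omega,\ m\le g(n)\}$. Because distinct layers $W_n$ are disjoint and each $D\cap W_n$ is finite, every point of $D$ is isolated in $D$, so $D$ is a discrete---hence nowhere dense---subset of $X$; consequently $\cl[\beta X]{D}\cap F=\emptyset$, since $F$ is remote (equivalently, far). On the other hand, for each $\alpha$ pick $n\in E_\alpha$ with $f_\alpha(n)\le g(n)$, so that $x(n,f_\alpha(n))\in D\cap U_\alpha$; thus $\cl[\beta X]{D}\cap\cl[\beta X]{U_\alpha}\supseteq D\cap U_\alpha\neq\emptyset$ for every $\alpha$. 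As $\{U_\alpha:\alpha<\kappa\}$ is directed and each of its members contains the closure of another, the sets $\{\cl[\beta X]{D}\cap\cl[\beta X]{U_\alpha}:\alpha<\kappa\}$ are closed subsets of the compact space $\cl[\beta X]{D}$ with the finite intersection property, so $\cl[\beta X]{D}\cap\bigcap_\alpha\cl[\beta X]{U_\alpha}=\cl[\beta X]{D}\cap F\neq\emptyset$, a contradiction. The whole problem is thereby reduced to constructing $g$.

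In constructing $g$ the two hypotheses are used in tandem. The role of $\kappa<\d$ is to control growth: extending each $f_\alpha$ to the total function $\hat f_\alpha(n)=f_\alpha(\min(E_\alpha\setminus n))$, the family $\{\hat f_\alpha:\alpha<\kappa\}$ is not dominating, so there is a nondecreasing $g_0$ with $\{n:g_0(n)>\hat f_\alpha(n)\}$ infinite for every $\alpha$; since $g_0$ is nondecreasing this already forces, for each $\alpha$, some $n\in E_\alpha$ with $f_\alpha(n)<g_0(n)$. The role of $\kappa<\nonM$ is to shore up the fragile part of the scaffolding, namely the tacit demand that every $E_\alpha$ be infinite---equivalently, that $F$ sit inside $\cl[\beta X]{\bigcup_nW_n}$ rather than be partly supported over the meager remainder $\bigcup_n\bd{V_{n+1}}\cup\bigcap_n\cl{V_n}$. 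Neighborhoods $U_\alpha$ that are thin over that remainder are not met by column sets of the above shape, and handling them forces one to replace the naive $D$ by a set chosen generically against the $\kappa$ nowhere dense constraints inside a suitable bounded product $\prod_n(g(n)+1)$, which is exactly the step that uses the meagerness of $\kappa$-sized sets. I expect this reconciliation to be the main difficulty: one must arrange the scaffolding and the bound $g$ so that the matches delivered by the $\d$-argument survive in the $\nonM$-argument, while at the same time absorbing those neighborhoods that miss all the layers $W_n$. The remaining details follow the pattern of Theorem~\ref{geqcovM}.
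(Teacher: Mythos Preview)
Your $\d$-argument is correct and in fact already constitutes a complete proof---indeed of the stronger inequality $\chi(F)\geq\d$---once you observe that each $E_\alpha$ is infinite, and this does \emph{not} require $\kappa<\nonM$. Choose the $V_n$ regular open with $\bigcap_n V_n=\emptyset$ (replace each $V_n$ by the interior of $\cl{V_n}$ if necessary). For any $\alpha$ and any $n$, the open set $U_\alpha\cap\Ex{V_n}$ contains $F\neq\emptyset$, so $U_\alpha\cap V_n$ is a non-empty open subset of $X$; if it were disjoint from $W_n=V_n\setminus\cl{V_{n+1}}$ it would lie in $\cl{V_{n+1}}$, hence in $V_{n+1}$ by regular-openness, and iterating would force $U_\alpha\cap V_N\subset\bigcap_m V_m=\emptyset$. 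So $E_\alpha$ is infinite for free, your $\hat f_\alpha$, $g_0$ and $D$ are well defined, and the contradiction follows exactly as you wrote in your second paragraph. Your closing paragraph about ``reconciliation'' and the role of $\nonM$ is therefore based on a misdiagnosis: there is no remaining difficulty, and $\nonM$ plays no part in your argument.

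The paper's proof is genuinely different and does use both hypotheses. Rather than fixing a countable dense set in each $W_n$, the paper selects for every $\alpha$ and every $n\in E_\alpha$ an arbitrary witness $x(\alpha,n)\in U_\alpha\cap W_n$. The set $M_n=\{x(\alpha,n):\alpha<\kappa\}\subset W_n$ then has size at most $\kappa<\nonM$, hence is meager in the crowded open set $W_n$, so it can be covered by an increasing chain $K(n,0)\subset K(n,1)\subset\cdots$ of closed nowhere dense subsets of $W_n$. Setting $f_\alpha(n)=\min\{m:x(\alpha,n)\in K(n,m)\}$ and using $\kappa<\d$ to produce $f\in\bairew$ with $f_\alpha(n)<f(n)$ for some $n\in E_\alpha$ (for every $\alpha$), the paper obtains the closed nowhere dense set $N=\bigcup_n K(n,f(n))$ meeting every $U_\alpha$. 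Your route sidesteps the meagerness step entirely by working inside a fixed countable set from the outset, which is exactly why it delivers the sharper bound $\d$ in place of $\min\{\nonM,\d\}$.
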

\begin{proof}
Let $k<\min\{\nonM,\d\}$ and assume that there is a collection $\{U_\alpha:\alpha<\kappa\}$ of open subsets of $\beta X$ with the finite intersection property and whose intersection is equal to $F$. We shall show that there is a nowhere dense subset $N$ of $X$ such that $F\cap\cl[\beta X]{N}\neq\emptyset$. Let us use the following sets defined in the proof of Theorem \ref{geqcovM}:  $\{V_n:n<\omega\}$ and $\{W_n:n<\omega\}$. 

For each $\alpha<\kappa$ and $n<\omega$, choose any point $x(\alpha,n)\in W_n\cap U_\alpha$ whenever this intersection is non-empty. Then for each $n<\omega$ the set $M_n=\{x(\alpha,n):\alpha<\kappa,W_n\cap U_\alpha\neq\emptyset\}$ is meager so there are closed nowhere dense subsets $\{K(n,m):m<\omega\}$ of $W_n$ such that their union contains $M_n$. Without loss of generality, we may assume that $K(n,m)\subset K(n,m+1)$ for each $n,m<\omega$.

For each $\alpha<\kappa$, we define a function $f_\alpha$ with ${dom}(f_\alpha)=\{n<\omega:W_n\cap U_\alpha\neq\emptyset\}$ by $f_\alpha(n)=\min\{k<\omega:x(\alpha,n)\in K(n,k)\}$. Notice that ${dom}(f_\alpha)$ is an infinite set for each $\alpha<\kappa$. By Theorem 3.6 in \cite{vd62}, there is $f:\omega\to\omega$ such that for all $\alpha<\kappa$ and $m<\omega$ there is $n\in{dom}(f_\alpha)\setminus m$ such that $f_\alpha(n)<f(n)$.

Define $N=\bigcup\{K(n,f(n)):n<\omega\}$, this is a closed and nowhere dense subset of $X$. By the properties of $f$, for every $\alpha<\omega$ there is some $n<\omega$ such that $x(\alpha,n)\in N$. So $N$ is a nowhere dense subset of $X$ that intersects every element from $\{U_\alpha:\alpha<\kappa\}$. This shows that $F\cap\cl[\beta X]{N}\neq\emptyset$.
\end{proof}

\begin{coro}\label{inequalities}
If $X$ is a non-compact, crowded, separable metrizable space, then $\covM\leq\remcard(X)\leq\cofM$ and $\min\{\nonM,\d\}\leq\remcard(X)$.
\end{coro}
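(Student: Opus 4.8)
The plan is to read off all three inequalities directly from the three results immediately preceding the corollary, so essentially no new work is required. First I would invoke the Proposition guaranteeing a non-empty remote closed subset $F$ of $X$ with $\chi(F)\leq\cofM$. This simultaneously shows that $\remcard(X)\neq\infty$ (so that the quantity is a genuine cardinal and the inequalities make sense) and, by the very definition of $\remcard(X)$ as the minimum of $\chi(F)$ over non-empty remote closed sets $F$, yields $\remcard(X)\leq\chi(F)\leq\cofM$.

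Next I would fix any non-empty closed remote subset $F$ of $X$ witnessing $\remcard(X)$, i.e.\ with $\chi(F)=\remcard(X)$ (such an $F$ exists precisely because $\remcard(X)$ is attained and finite, as just established). Applying Theorem \ref{geqcovM} to this $F$ gives $\covM\leq\chi(F)=\remcard(X)$, and applying the preceding Theorem (the one giving the lower bound $\min\{\nonM,\d\}$) to the same $F$ gives $\min\{\nonM,\d\}\leq\chi(F)=\remcard(X)$. Combining with the upper bound from the first paragraph produces exactly the claimed chain $\covM\leq\remcard(X)\leq\cofM$ together with $\min\{\nonM,\d\}\leq\remcard(X)$.

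There is no real obstacle here; the only point worth stating carefully is that the lower-bound theorems are phrased for an arbitrary non-empty closed remote $F$, so they apply in particular to a witness of $\remcard(X)$, and that the upper-bound proposition is what rules out the degenerate case $\remcard(X)=\infty$. One could also remark, as a sanity check consistent with the cardinal diagram in Section 2, that $\covM\leq\cofM$ and $\min\{\nonM,\d\}\leq\cofM$ already hold in ZFC, so the displayed inequalities are mutually compatible; but this is not needed for the proof itself.
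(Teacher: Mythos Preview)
Your proposal is correct and matches the paper's approach exactly: the corollary is stated without proof in the paper, being an immediate consequence of the three preceding results (the Proposition giving a remote closed set of character $\leq\cofM$, Theorem~\ref{geqcovM}, and the theorem bounding $\chi(F)$ below by $\min\{\nonM,\d\}$), and your write-up just makes this explicit.
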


In order to prove Theorem \ref{topothm}, we will first need to construct a special remote filter.

\begin{thm}\label{filterlinearlyordered}
If $\Pint=\cofM$, then there is a remote filter in $\csfo$ with a base of order type $\Pint$ with respect to the relation $\subset\sp\ast$.
\end{thm}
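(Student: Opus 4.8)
The plan is to build the desired base $\{F_\alpha : \alpha < \Pint\}$ of clopen sets of $\csfo$ by transfinite recursion of length $\Pint$, maintaining two invariants at each stage $\alpha$: first, that the sets already constructed are $\subset\sp\ast$-decreasing (so $F_\beta \subset\sp\ast F_\gamma$ whenever $\beta > \gamma$), and second, that the partial family has the \emph{strong} finite intersection property in the sense needed for remoteness, i.e.\ for every finite subfamily $F_{\alpha_0}, \ldots, F_{\alpha_{k}}$ and every nowhere dense $N$ from a fixed cofinal family of nowhere dense sets, $F_{\alpha_0}\cap\cdots\cap F_{\alpha_k}\setminus N$ is non-compact (indeed contains a set of the form $\bigcup\{I_n : n \in E\}$ for some infinite $E$, where $\{I_n : n<\omega\}$ is a fixed discrete family of compact-open sets). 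Since $\Pint = \cofM$, fix by Theorem \ref{cofnwdrationals} a cofinal family $\nwd = \{N_\alpha : \alpha < \Pint\}$ of closed nowhere dense subsets of $\csfo$, enumerated so that at stage $\alpha$ we must ``kill'' $N_\alpha$ by arranging $F_{\alpha+1}\cap N_\alpha = \emptyset$ (or at least $F_{\alpha+1}\cap\cl[\beta X]{N_\alpha}=\emptyset$).

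The two key moves are the successor step and the limit step. At a \textbf{successor} stage, given $F_\alpha$, I would set $F_{\alpha+1} = F_\alpha \setminus \overline{N_\alpha}$ intersected with a suitable clopen set; more carefully, one uses the van Douwen-style construction from the proof of the proposition preceding Theorem \ref{geqcovM} to thin $F_\alpha$ out to a clopen set which misses $N_\alpha$ but still meets cofinally many of the $I_n$'s, so remoteness against $N_\alpha$ is secured while the invariants survive and $F_{\alpha+1} \subset\sp\ast F_\alpha$. The \textbf{limit} step is the crux and is where $\Pint$ is used essentially: given a $\subset\sp\ast$-decreasing sequence $\{F_\beta : \beta < \alpha\}$ with $\alpha < \Pint$, I need a clopen $F_\alpha$ with $F_\alpha \subset\sp\ast F_\beta$ for all $\beta < \alpha$, still ``large'' in the above sense. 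Identifying each $F_\beta$ (modulo compact sets) with the infinite set $E_\beta = \{n : I_n \subset\sp\ast F_\beta\} \in [\omega]\sp\omega$, the family $\{E_\beta : \beta < \alpha\}$ is $\subseteq\sp\ast$-decreasing, hence centered, and has size $< \Pint$; therefore it has a pseudointersection $E \in [\omega]\sp\omega$, and I take $F_\alpha$ to be (essentially) $\bigcup\{I_n : n \in E\}$, trimmed inside $\bigcap_{\beta<\alpha}F_\beta$ so that it is genuinely $\subset\sp\ast F_\beta$ and still remote against all $N_\gamma$, $\gamma<\alpha$ — the latter because each $F_\beta$ was already remote against $N_\gamma$ for $\gamma \le \beta$ and $F_\alpha$ is $\subset\sp\ast$-below all of them.

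I expect the main obstacle to be the bookkeeping at limit stages: a pseudointersection of the $E_\beta$ only gives $F_\alpha \subset\sp\ast F_\beta$ up to a \emph{compact} error, and one must verify that after passing below all the $N_\gamma$ ($\gamma < \alpha$) the resulting set is still non-compact after removing any finite union of them together with any further $N$ from $\nwd$ — i.e.\ that the ``strong f.i.p.'' invariant is genuinely maintained and not merely the f.i.p. This is exactly the point where the discrete family $\{I_n\}$ and the careful indexing $k(N,n,m)$ from the earlier van Douwen-type argument pay off: one arranges throughout that $F_\alpha$ contains $\bigcup\{I_n : n \in E'\}$ for a suitable cofinite $E' \subseteq E$ missing any prescribed finite list of the $N_\gamma$, and since a finite union of nowhere dense sets is nowhere dense and belongs (up to enlargement) to $\nwd$, this reduces the finite case to the single case already handled at successors. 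Once the recursion goes through, $\F$ generated by $\{F_\alpha : \alpha < \Pint\}$ is a remote filter: any nowhere dense $N$ is contained in some $N_\alpha \in \nwd$, and $F_{\alpha+1}$ (hence every later basic set) is disjoint from $\cl[\beta X]{N_\alpha} \supseteq \cl[\beta X]{N}$, so the closed set $\bigcap_\alpha \cl[\beta X]{F_\alpha}$ misses $\cl[\beta X]{N}$; and the base is $\subset\sp\ast$-decreasing of order type $\Pint$ by construction.
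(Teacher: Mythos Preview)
Your overall strategy---fix a cofinal family $\{N_\alpha : \alpha < \Pint\}$ of nowhere dense sets and build a $\subset\sp\ast$-decreasing chain of clopen sets by recursion of length $\Pint$---matches the paper's, but the limit step as you describe it does not work. The problem is the identification of $F_\beta$ with $E_\beta = \{n : I_n \subset\sp\ast F_\beta\}$. As soon as you perform a single successor step and thin $F_\alpha$ to miss a closed nowhere dense set, the resulting $F_{\alpha+1}$ will in general contain no full $I_n$: a closed nowhere dense subset of $\csfo$ (for instance $\omega\times\{p\}$ for any fixed $p\in\csf$) can meet every member of your discrete family $\{I_n:n<\omega\}$, so $F_{\alpha+1}\cap I_n$ is a \emph{proper} clopen subset of $I_n$ for every $n$. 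Hence $E_{\alpha+1}$ is finite and the reduction to pseudointersections in $[\omega]\sp\omega$ collapses. The same objection kills the invariant you propose, that $F_\alpha\setminus N$ contain $\bigcup\{I_n:n\in E\}$ for some infinite $E$: this is simply false for any $N$ meeting every $I_n$.

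What is really needed at stage $\kappa$ is a clopen $U_\kappa$ that, on each slice $\{n\}\times\csf$, picks a \emph{nonempty} clopen subset which (for all but finitely many $n$) lies inside the $n$-th slice of every previously built $U_\alpha$, $\alpha<\kappa$, and simultaneously misses $N_\kappa$. This diagonalization over $\kappa$ many constraints cannot be phrased as a single pseudointersection in $[\omega]\sp\omega$; the paper handles it by invoking the full strength of $\MA[\sigmacent]{\kappa}$ (equivalent to $\kappa<\Pint$ by Theorem~\ref{martins}(a)). Concretely one forces with pairs $\pair{f,\alpha}$ where $f\in{}\sp{<\omega}\omega$ specifies finitely many slices of $U_\kappa$ (each chosen to miss $N_\kappa$) and $\alpha<\kappa$ is a side condition promising that all later slices will lie inside $U_\alpha$; this poset is $\sigma$-centered because conditions with the same first coordinate are compatible. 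There is no successor/limit dichotomy in the paper: the same forcing argument handles every stage, and the invariant that replaces your unattainable one is simply that $U_\alpha$ meets every slice $\{n\}\times\csf$.
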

\begin{proof}
By Theorem \ref{cofnwdrationals}, there is a collection $\{N_\alpha:\alpha<\Pint\}$ of closed nowhere dense subsets of $\csfo$ cofinal in the family of all closed nowhere dense subsets of $\csfo$. Let $\{B(n):n<\omega\}$ be the family of all clopen subsets of $\csf$. By recursion, we will construct a collection of clopen subsets $\{U_\alpha:\alpha<\Pint\}$ of $\csfo$ such that 
\begin{itemize}
\item[(a)] $U_\beta\subset\sp\ast U_\alpha$ if $\alpha<\beta<\Pint$,
\item[(b)] $U_\alpha\cap N_\alpha=\emptyset$ for every $\alpha<\Pint$, and
\item[(c)] given $\alpha<\Pint$, the set of $n<\omega$ such that $U_\alpha\cap(\{n\}\times\csf)\neq\emptyset$ is infinite.
\end{itemize}
Then the filter of clopen subsets of $\csfo$ generated by $\{U_\alpha:\alpha<\Pint\}$ will be as required.

Assume that we have constructed the clopen sets $\{U_\alpha:\alpha<\kappa\}$ for some $\kappa<\Pint$, we would like to choose $U_\kappa$.  For each $\alpha<\kappa$, let $E_\alpha=\{n<\omega: U_\alpha\cap(\{n\}\times\csf)\neq\emptyset\}$ and $f_\alpha:E_\alpha\to\omega$ be a function such that $U_\alpha\cap(\{n\}\times\csf)=\{n\}\times B(f_\alpha(n))$ for all $n\in E_\alpha$. Consider the poset
\begin{align}
\poset=&\{\pair{s,F}:s\in[\omega\times\omega]\sp{<\omega}\textrm{ is a function}, F\in[\kappa]\sp{<\omega},\textrm{ and}\nonumber\\
&\hskip20pt\forall n\in{dom}(s)\ ([\{n\}\times B(f(n))]\cap N_\kappa=\emptyset)\}\nonumber
\end{align}
where $\pair{t,G}\leq\pair{s,F}$ if
\begin{itemize}
\item[(1)] $s\subset t$,
\item[(2)] $F\subset G$ and
\item[(3)] if $\alpha\in F$ and $k\in{dom}(t)\setminus {dom}(s)$, then $k\in E_\alpha$ and $B(t(k))\subset B(f_\alpha(k))$.
\end{itemize}
First, notice that $\poset$ is $\sigma$-centered: if $s\in[\omega]\sp{<\omega}$ is a function, $\{\pair{t,F}\in\poset:t=s\}$ is centered. For $\gamma<\kappa$, let $D(\gamma)=\{\pair{s,F}\in\poset:\gamma\in F\}$ and for $m<\omega$, let $E(m)=\{\pair{s,F}:{dom}(s)\setminus m\neq\emptyset\}$. 

It is easy to prove that $D(\gamma)$ is dense for every $\gamma<\kappa$. So let $m<\omega$, we will prove that $E(m)$ is dense. Given $\pair{s,F}\in\poset$, let $\beta=\max F$. Then there is $k<\omega$ such that $E_{\beta}\setminus k\subset E_{\alpha}$ for all $\alpha\in F$, ${dom}(s)\subset k$ and $k\geq m$. So let $i\in E_{\beta}\setminus k$ and choose $j<\omega$ with $\{i\}\times B(j)\subset U_{\beta}\setminus N_\kappa$. Then $\pair{s\cup\{\pair{i,j}\},F}\in E(m)$ and $\pair{s\cup\{\pair{i,j}\},F}\leq\pair{s,F}$.

Since $\kappa<\Pint$, by Theorem \ref{martins}, there exists a filter $\gen$ that intersects each of these dense sets. Let $f_\kappa=\bigcup\{s:\exists F\ (\pair{s,F}\in\gen)\}$. By genericity it easily follows that ${dom}(f_\kappa)$ is infinite. Define $U_\kappa=\bigcup\{\{n\}\times B(f_\kappa(n)):n\in{dom}(f_\kappa)\}$, this is a clopen subset and clearly conditions (b) and (c) hold.

Now let us show that condition (a) holds for this step of the construction. Given $\alpha<\kappa$, we would like to prove that $U_\kappa\subset\sp\ast U_\alpha$. Let $\pair{s,F}\in \gen\cap D(\alpha)$, so that $\alpha\in F$. Let $m=\max{({dom}(s))}$, we claim that $U_\kappa\cap((\omega\setminus m)\times{\csf})\subset U_\alpha$. Let $k>m$ with $k\in {dom}(f_\kappa)$. Then there is $\pair{t,G}\in\gen$ with $k\in{dom}(s)$ and we may assume that $\pair{t,G}\leq\pair{s,F}$. Then by condition (3) in the definition of the poset, $k\in E_\alpha$ and $B(s(k))\subset B(f_\alpha(k))$. Since $s(k)=f_\kappa(k)$, we obtain that $U_\kappa\cap(\{k\}\times\csf)\subset U_\alpha$. But this was true for all $k>m$ so $U_\kappa\subset\sp\ast U_\alpha$.  

This completes the construction of $\{U_\alpha:\alpha<\Pint\}$ as required, which completes the proof of the Theorem.
\end{proof}

\begin{proof}[{\bf Proof of Theorem \ref{topothm}}]
By Theorem \ref{filterlinearlyordered}, there is a collection $\{U_\alpha:\alpha<\Pint\}$ of clopen subsets of $\csfo$ which generates a remote filter of clopen sets and such that $U_\alpha\subsetneq\sp\ast U_\beta$ whenever $\beta<\alpha<\kappa$. For each $\alpha<\kappa$, let $V_\alpha=\cl[\beta(\csfo)]{U_\alpha}\cap (\csfo)\sp\ast$, which is a clopen subset of $\csfo$. Then $F=\bigcap\{V_\alpha:\alpha<\kappa\}$ is a remote set of $\csfo$, closed in $\beta(\csfo)$. We may assume without loss of generality that $U_0=\csfo$ so $V_0=(\csfo)\sp\ast$.

For each $\alpha<\Pint$, let us define 
$$
F_\alpha=\left\{
\begin{array}{ll}
(\bigcap\{V_\beta:\beta<\alpha\})\setminus V_{\alpha+1},&\textrm{ if $\alpha$ is a limit,}\\
V_\alpha\setminus V_{\alpha+1}, &\textrm{ otherwise.}
\end{array}\right.
$$
Now let $X=(\csfo)\cup\{F\}\cup\{F_\alpha:\alpha<\kappa\}$ with the quotient space topology. Then $X$ is a (Tychonoff) compactification of $\csfo$ with remainder $R=\{F\}\cup\{F_\alpha:\alpha<\kappa\}$. Moreover, the function $f:R\to\kappa+1$ defined by $f(F_\alpha)=\alpha$ when $\alpha<\kappa$ and $f(F)=\kappa$ is a homeomorphism, where $\kappa+1$ is given the ordinal topology.

We now argue that $X$ is discretely generated at all points except at $F$. Clearly, $X$ is discretely generated at each point of $\csfo$ because it is first-countable at each of these points. Also, $F\in\cl[X]{\csfo}$ but there is no discrete subset of $\csfo$ whose closure contains the point $F$ because $F$ is a remote closed subset of $\csfo$. 

Now let $\alpha<\kappa$ and let $A\subset X$ such that $F_\alpha\in\cl[X]{A}$. If $F_\alpha\in\cl[X]{A\cap R}$, then since $R$ is linearly ordered and linearly ordered spaces are discretely generated (see \cite[Corollary 3.12]{disc_gen_first}), there is a discrete subset $D\subset R$ such that $F_\alpha\in\cl[X]{D}$. 

Suppose now that $F_\alpha\in\cl[X]{A\cap(\csfo)}$. Let $B=A\setminus U_{\alpha+1}$ and $Y=[(\csfo)\setminus U_{\alpha+1}]\cup\{F_\beta:\beta\leq\alpha\}$. Then $Y$ is a clopen subset of $X$ that contains $B\cup\{F_\alpha\}$ and $F_\alpha\in\cl[Y]{B}$. Moreover, let $g:\beta B\to Y$ be the unique continuous extension of the inclusion $B\subset Y$. Since $F_\alpha$ has character striclty smaller $\Pint$ in $Y$, $g\sp\leftarrow[F_\alpha]$ has character strictly smaller than $\Pint$ in $\beta B$. By the inequalities of Corollary \ref{inequalities}, $g\sp\leftarrow[F_\alpha]$ is not remote in $\beta B$ so there is a discrete subset $D\subset B$ such that $\cl[\beta B]{D}\cap g\sp\leftarrow[F_\alpha]\neq\emptyset$. Thus, $F_\alpha\in\cl[X]{D}$. This shows that $F_\alpha$ is not remote in $X$ so we have finished the proof.
\end{proof}

\section{Some models}

The objective of this section is to ask when a remote filter (in $\csfo$) is still remote under a forcing extension of the universe. In order to do this, we will quote some known results which show that the corresponding forcing notions preserve remote filters. See \cite{kunen-set-theory-2011} for an introduction on forcing and \cite{bart} for more advanced results. We will denote the ground model by $\Vuniverse$.

For a forcing notion $\poset$, we will say that $\poset$ is \emph{nwd-bounding} if whenever $p\in\poset$ and $\dot{A}$ is a $\poset$-name such that $p\Vdash``\dot{A}\textrm{ is closed and nowhere dense in }\csf\textrm{''}$, then there is a closed and nowhere dense subset $B$ of $\csf$ and $q\leq p$ such that $q\Vdash``\dot{A}\subset B\textrm{''}$.

In great part of the literature, authors have been more concerned about when a forcing notion preserves meager sets. However, for proper forcing notions, preserving meager sets is equivalent to being nwd-bounding. The author of this note could not find an explicit proof but one can easily modify the proof of \cite[Lemma 6.3.21]{bart} to obtain this. Moreover, \cite[Theorem 6.3.22]{bart} can then be translated to the following.

\begin{thm}
The countable support iteration of proper forcing notions that are nwd-bounding is also nwd-bounding.
\end{thm}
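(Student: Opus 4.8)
The plan is to reduce this to the corresponding preservation theorem for the meager ideal, by means of the equivalence recorded just above: for a \emph{proper} forcing notion $\poset$, being nwd-bounding is the same as preserving meager sets (this is the modification of \cite[Lemma 6.3.21]{bart} mentioned before the statement). So fix a countable support iteration $\pair{\poset_\alpha,\dot{\Q}_\alpha:\alpha<\delta}$ for which, for every $\alpha<\delta$, $\Vdash_{\poset_\alpha}$ ``$\dot{\Q}_\alpha$ is proper and nwd-bounding'' (the latter interpreted over the intermediate extension by $\poset_\alpha$); the aim is to show that $\poset_\delta$ is nwd-bounding. Since it is immaterial whether one phrases ``nwd-bounding'' over $\csf$ or over $\bairew$, there is no loss in working with whichever is convenient.

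First I would assemble two ingredients. By the equivalence applied iterand by iterand, $\Vdash_{\poset_\alpha}$ ``$\dot{\Q}_\alpha$ preserves meager sets'' for every $\alpha<\delta$, so the translation of \cite[Theorem 6.3.22]{bart} gives that $\poset_\delta$ preserves meager sets. Independently, Shelah's theorem that a countable support iteration of proper forcing notions is again proper (see \cite{bart}) gives that $\poset_\delta$ is proper. Applying the equivalence once more, now to $\poset_\delta$ itself, a forcing that is proper and preserves meager sets is nwd-bounding; hence $\poset_\delta$ is nwd-bounding, as desired.

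The genuine content lies in the two ``translations,'' so that is where I would concentrate. For the equivalence ``proper $+$ nwd-bounding $\Longleftrightarrow$ proper $+$ preserves meager sets,'' the forward direction is routine: a meager subset of $\csf$ in the extension is a countable union of closed nowhere dense sets, each covered --- by nwd-boundingness --- by a closed nowhere dense set from the ground model, whence the union is covered by a ground-model meager set; conversely the closure of a nowhere dense set is closed nowhere dense, so covering meager sets covers closed nowhere dense ones, and properness is used only because that is the hypothesis under which \cite[Lemma 6.3.21]{bart} is stated. For the translation of \cite[Theorem 6.3.22]{bart}, one reruns the fusion argument there reading ``closed nowhere dense set'' for ``meager set'' throughout. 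The delicate point, which I expect to be the main obstacle, is the limit stage of cofinality $\omega$: a $\poset_\delta$-name for a closed nowhere dense subset of $\csf$ must be captured along a cofinal $\omega$-sequence of coordinates and the closed nowhere dense sets read off from the intermediate extensions amalgamated into one closed nowhere dense set of the ground model --- precisely as in the classical category-preservation proof, and using properness of the tail iterations to perform the amalgamation.
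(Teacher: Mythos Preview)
Your proposal is correct and matches the paper's approach exactly: the paper does not give a proof at all, but simply remarks that for proper forcing the nwd-bounding property is equivalent to preserving meager sets (via the indicated modification of \cite[Lemma 6.3.21]{bart}) and then states the theorem as the translation of \cite[Theorem 6.3.22]{bart}. Your first two paragraphs carry this out precisely --- convert each iterand to ``preserves meager'', quote \cite[Theorem 6.3.22]{bart} as a black box, note the iteration is proper, and convert back --- so the last paragraph about rerunning the fusion argument at limit stages is unnecessary once you have the equivalence and are willing to cite \cite[Theorem 6.3.22]{bart}.
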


A forcing notion $\poset$ has the Sacks property if whenever $p\in\poset$ and $\dot{f}$ is a $\poset$-name such that $p\Vdash``\dot{f}\in{}\sp{\omega}{\Vuniverse}\textrm{''}$, there is $F\in{}\sp{\omega}{\Vuniverse}$ such that $|F(n)|\leq 2\sp{n}$ for all $n<\omega$ and $q\leq p$ such that $q\Vdash``\forall n<\omega\ (f(n)\in F(n))\textrm{''}$. According to Miller (\cite{miller-notes}), Shelah proved that any forcing with the Sacks property is nwd-bounding. The author of this note could not find the proof of this result in the literature. For the sake of completeness we include the sketch of a proof provided by Osvaldo Guzm\'an Gonz\'alez.

\begin{lemma}
Any forcing notion with the Sacks property is nwd-bounding.
\end{lemma}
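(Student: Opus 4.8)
The plan is to use the two features of the Sacks property that matter here: it implies that $\poset$ is $\bairew$-bounding, and it lets one trap a name for a sequence of finite objects inside a ground-model ``slalom''. Fix $p\in\poset$ and a name $\dot A$ with $p\Vdash``\dot A\textrm{ is closed and nowhere dense in }\csf\textrm{''}$. I will work with $\dot A$ through its tree $\dot T=\{s\in{}\sp{<\omega}2:[s]\cap\dot A\neq\emptyset\}$, where $[s]=\{x\in\csf:s\subset x\}$; thus $p$ forces $\dot T$ to be a subtree of ${}\sp{<\omega}2$, with $\dot A$ its set of branches, such that every node of $\dot T$ has an extension outside $\dot T$. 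For $q\leq p$ set $S_q=\{s\in{}\sp{<\omega}2:q\Vdash``s\notin\dot T\textrm{''}\}$; this set is closed under extensions, so ${}\sp{<\omega}2\setminus S_q$ is a subtree. The reduction is: it suffices to produce $q\leq p$ for which $S_q$ is \emph{dense from above}, i.e.\ every $s\in{}\sp{<\omega}2$ has an extension in $S_q$. Indeed, then $B=\{x\in\csf:$ no initial segment of $x$ lies in $S_q\}$ is a ground-model closed nowhere dense subset of $\csf$, and since $q$ forces $\dot T\subseteq{}\sp{<\omega}2\setminus S_q$ we get $q\Vdash\dot A\subseteq B$.

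To build such a $q$ I would first apply $\bairew$-boundedness to the name for the function sending $s\in{}\sp{<\omega}2$ to the least $k$ such that $s$ has an extension outside $\dot T$ of length $\leq|s|+k$: this yields $q_0\leq p$ and a ground-model increasing $\delta\in\bairew$ such that $q_0$ forces every node $s$ to have an extension outside $\dot T$ of length at most $|s|+\delta(|s|)$. Using $\delta$, I would define in the ground model a sequence $0=m_0<m_1<m_2<\cdots$ so that the $j$-th block $[m_j,m_{j+1})$ is long enough that, below $q_0$, above any node of length $m_j$ one can pass to a ``shortest extension outside $\dot T$'' some $2\sp j$ times in succession while staying below level $m_{j+1}$ (concretely, let $m_{j+1}$ be obtained by iterating $n\mapsto n+\delta(n)$ about $2\sp j$ times from $m_j$, with a fixed number of extra steps to spare). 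Next I would apply the Sacks property to the name $\pair{\dot T\cap{}\sp{<m_{j+1}}2:j<\omega}$, obtaining $q_1\leq q_0$ and ground-model finite sets $\mathcal E_j\subseteq\mathcal P({}\sp{<m_{j+1}}2)$ with $|\mathcal E_j|\leq 2\sp j$ and $q_1\Vdash\dot T\cap{}\sp{<m_{j+1}}2\in\mathcal E_j$ for all $j$. Since below $q_1$ the true value is a downward-closed tree, coherent with the previous level, that thins out over the block in the sense just described, I would delete from each $\mathcal E_j$ every candidate failing these properties, which does not disturb $q_1\Vdash\dot T\cap{}\sp{<m_{j+1}}2\in\mathcal E_j$.

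Then I would verify that $S_{q_1}$ is dense from above. Given $s$, pick $j$ with $m_j>|s|$ and an extension $u_0\supseteq s$ of length $m_j$, and list $\mathcal E_j=\{E_1,\dots,E_r\}$ with $r\leq 2\sp j$. Recursively, given $u_{i-1}$ of length $<m_{j+1}$ lying outside $E_1\cup\dots\cup E_{i-1}$, use the thinning property of the candidate $E_i$ to choose $u_i\supseteq u_{i-1}$ of length $<m_{j+1}$ with $u_i\notin E_i$; since the $E_\ell$ are downward closed this keeps $u_i$ outside $E_1\cup\dots\cup E_i$, and the bound $\delta$ ensures that after the $r\leq 2\sp j$ steps the length is still below $m_{j+1}$. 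The node $w=u_r$ then extends $s$, has length $<m_{j+1}$, and lies outside $\bigcup\mathcal E_j$; hence in any generic filter containing $q_1$ it lies outside $\dot T\cap{}\sp{<m_{j+1}}2$ and so outside $\dot T$, i.e.\ $q_1\Vdash``w\notin\dot T\textrm{''}$ and $w\in S_{q_1}$. Thus $S_{q_1}$ is dense from above, and $q_1$ together with the resulting $B$ witnesses the required instance of nwd-boundedness.

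I expect the real work to be the bookkeeping of the second paragraph: one must make the blocks coarse enough—this is precisely where $\bairew$-boundedness, a consequence of the Sacks property, is used—that one node can simultaneously avoid all $2\sp j$ slalom candidates of block $j$, and one must isolate exactly the right ``coherence and thinning'' properties of chunks of a nowhere dense tree so that the slalom candidates can be pruned to share them. Once those arrangements are in place, the avoidance argument of the third paragraph is routine; note in particular that, because $B$ is built from the \emph{union} $\bigcup\mathcal E_j$ of the candidates rather than from any single one, no issue arises about which candidate is the ``correct'' one.
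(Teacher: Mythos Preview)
Your sketch is correct and the bookkeeping you flag can indeed be made precise, but your route differs from the paper's. You work directly with the tree $\dot T$ of the name $\dot A$: you use $\bairew$-bounding to control the ``thinning rate'' of $\dot T$, then apply the Sacks property to the finite chunks $\dot T\cap{}\sp{<m_{j+1}}2$ and diagonalize against the $\leq 2\sp j$ candidates on each block, relying on downward closure to ensure that once a node leaves a candidate it never returns. The paper instead passes through a combinatorial characterization of nowhere dense sets: for $p\in\csf$ and a strictly increasing $j\in\bairew$ with $j(0)=0$ one sets $\U(p,j)=\{q\in\csf:\exists n\ (p\!\!\restriction_{[j(n),j(n+1))}=q\!\!\restriction_{[j(n),j(n+1))})\}$, and every nowhere dense set misses some such $\U(p,j)$. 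Given names $\dot x,\dot f$ witnessing this for $\dot A$, the paper uses $\bairew$-bounding to coarsen $\dot f$ to a ground-model partition $f$, groups the $f$-intervals into super-blocks of sizes $2\sp n$, applies the Sacks property to $\dot x$ restricted to these super-blocks, and splices the $\leq 2\sp n$ candidate values together to obtain a single ground-model $y$ and $g$ with $\U(y,g)\subset\U(\dot x,\dot f)$. Your argument is more self-contained (no auxiliary characterization is needed) and generalizes readily to other tree-coded ideals; the paper's argument is shorter once the $\U(p,j)$ description is in hand and makes the role of the slalom particularly transparent, since one is literally guessing finite pieces of a single real $\dot x$ rather than finite pieces of a tree.
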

\begin{proof}
First, we need a combinatorial characterization of nowhere dense subsets of $\csf$ in the spirit of \cite[Theorem 2.2.4]{bart}. For each $p\in\csf$ and $j\in\bairew$ that is strictly increasing and $j(0)=0$, let 
$$
\U(p,j)=\{q\in\csf:\exists n\in\omega\ (p\!\!\restriction_{[j(n),j(n+1))}=q\!\!\restriction_{[j(n),j(n+1))})\},
$$
this is an open dense subset of $\csf$. Moreover, it is not hard to prove that if $N$ is a nowhere dense subset of $\csf$ there exists $p\in\csf$ and a strictly increasing $j\in\bairew$ such that $N\cap \U(p,j)=\emptyset$.

So let $\poset$ be a forcing notion with the Sacks property, let $\dot{x}$ and $\dot{f}$ be names and $p\in\poset$ be such that $p\Vdash``\dot{x}\in\csf,\dot{f}\in\bairew\textrm{ is strictly increasing and }\dot{f}(0)=0\textrm{''}$. We must find $q\in\poset$ with $q\leq p$, $y\in\csf\cap\Vuniverse$ and a strictly increasing $g\in\bairew\cap\Vuniverse$ with $g(0)=0$ such that $q\Vdash``\U(y,g)\subset\U(\dot{x},\dot{f})\textrm{''}$.

Recall that a forcing notion with the Sacks property is $\bairew$-bounding, that is, every function in $\bairew\cap\Vuniverse\sp{\poset}$ is (pointwise) bounded by a function in $\bairew\cap\Vuniverse$ (\cite[Lemma 6.3.38]{bart}). Using this, it is not hard to find a strictly increasing function $f\in\bairew\cap\Vuniverse$ with $f(0)=0$ and $p\sp\prime\in\poset$ such that $p\sp\prime\leq p$ and $p\sp\prime\Vdash``{}\forall n<\omega\ \exists m<\omega\ ([\dot{f}(m),\dot{f}(m+1))\subset[f(n),f(n+1)))\textrm{''}$. Then $p\sp\prime\Vdash``\U(\dot{x},f)\subset\U(\dot{x},\dot{f})\textrm{''}$.

Now let $I[n]=[f(n),f(n+1))$ and let us also write $s(n)=(2\sp0+2\sp1+\ldots+2\sp{n})-1$ for each $n<\omega$. In the generic extension, let $\dot{h}$ be a function with domain $\omega$ such that $\dot{h}(n)=x\!\!\restriction_{I[s(n)]\cup\ldots\cup I[s(n+1)-1]}$ for each $n<\omega$. By the Sacks property, there is a function $H\in\Vuniverse$ with domain $\omega$ and $q\leq p\sp\prime$ such that $q\Vdash``\forall n<\omega\ (\dot{h}(n)\in H(n))\textrm{''}$ and $|H(n)|\leq 2\sp{n}$ for each $n<\omega$. We may assume that $H(n)=\{H(n,0),\ldots,H(n,2\sp{n}-1)\}$ are all functions with domain $I[s(n)]\cup\ldots\cup I[s(n+1)-1]$ to $\{0,1\}$. Define $y\in\csf$ in such a way that when $n<\omega$ and $0\leq k<2\sp{n+1}$ then $y\!\!\restriction_{I[s(n)+k]}=H(n,k)\!\!\restriction_{I[s(n)+k]}$. Also let $g\in\bairew$ be defined by $g(n)=f(s(n))$, clearly $g\in\Vuniverse$, $g$ is strictly increasing and $g(0)=0$. From this it is not hard to see that $q\Vdash``\U(y,g)\subset\U(\dot{x},f)\textrm{''}$ so $q\Vdash``\U(y,g)\subset\U(\dot{x},\dot{f})\textrm{''}$ and we have finished the proof.
\end{proof}

Examples of forcing notions with the Sacks property are Silver forcing (see \cite[3.10, p. 17]{jech-multiple} for the definition) and Sacks forcing itself (see \cite[3.4, p. 15]{jech-multiple}). The poset that adds $\kappa$ (for any $\kappa$) Sacks reals side-by-side (a good introduction is \cite{baum-antimartin}) also has the Sacks property. Miller also proved that ``infinitely equal forcing'' (\cite[Definition 7.4.11]{bart}) is nwd-bounding, for a proof again modify the one given in \cite[Lemma 7.4.14]{bart}.

\begin{thm}\label{filterpreserved}
Assume that $\F$ is a remote filter of clopen sets of $\csfo$. If $\poset$ is any nwd-bounding forcing, then $\F$ is still a remote filter of clopen sets of $\csfo$ in the model obtained by forcing with $\poset$.
\end{thm}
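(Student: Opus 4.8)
The plan is to reduce everything to the defining clause of ``nwd-bounding'' applied to the Cantor set $\csf$ itself. Recall that, by Stone's duality, $\F$ being a remote filter of clopen sets of $\csfo$ is equivalent to the statement that for every closed nowhere dense $N\subseteq\csfo$ there is $U\in\F$ with $U\cap N=\emptyset$ (equivalently, with $\cl[\beta(\csfo)]{U}$ disjoint from $\cl[\beta(\csfo)]{N}$, since $U$ is clopen). Writing $\Vuniverse[G]$ for a $\poset$-generic extension, it therefore suffices to prove: whenever $p\in\poset$ and $\dot N$ is a $\poset$-name with $p\Vdash``\dot N\textrm{ is a closed nowhere dense subset of }\csfo\textrm{''}$, there exist $q\leq p$ and $U\in\F$ with $q\Vdash``U\cap\dot N=\emptyset\textrm{''}$. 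Granting this, no condition can force that $\F$ fails to be remote, since such a failure would be witnessed, below some extension of that condition, by a name $\dot N$ of the kind just described; and $\F$ still generates a \emph{proper} filter of clopen sets in $\Vuniverse[G]$ because a finite subfamily of $\F$ with empty intersection would already witness this in $\Vuniverse$.

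The first --- and essentially only nontrivial --- step is to pass from the non-compact space $\csfo$ to the compact space $\csf$, to which the hypothesis applies. Fix in $\Vuniverse$ the homeomorphism $e$ of $\csfo$ onto the open dense subspace $\csf\setminus\{\bar 0\}$ of $\csf$ sending $\pair{n,x}$ to the sequence $0\sp{n}1x$ (here $\bar 0$ denotes the constant $0$ sequence); this definition is absolute, so $e$ remains such a homeomorphism in $\Vuniverse[G]$. Because $e[\csfo]$ is open and dense in $\csf$, a relatively closed nowhere dense subset of $e[\csfo]$ has nowhere dense $\csf$-closure; hence $\dot M:=\cl[\csf]{e[\dot N]}$ is a $\poset$-name with $p\Vdash``\dot M\textrm{ is closed and nowhere dense in }\csf\textrm{''}$ and $p\Vdash``e[\dot N]\subseteq\dot M\textrm{''}$.

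Now apply the hypothesis that $\poset$ is nwd-bounding: there are $q\leq p$ and a closed nowhere dense $B\sp\prime\subseteq\csf$ in $\Vuniverse$ with $q\Vdash``\dot M\subseteq B\sp\prime\textrm{''}$. Then $B:=e\sp\leftarrow[B\sp\prime\cap e[\csfo]]$ is a closed nowhere dense subset of $\csfo$ lying in $\Vuniverse$, and $q\Vdash``\dot N\subseteq B\textrm{''}$. Since $\F$ is remote in $\Vuniverse$, choose $U\in\F$ with $U\cap B=\emptyset$ in $\Vuniverse$. Transporting $U$ and $B$ into $\csf$ via $e$, the disjointness of an open set and a closed set of $\csf$, each coded by a ground-model object, is absolute between $\Vuniverse$ and $\Vuniverse[G]$ --- by K\"onig's lemma it reduces to the finiteness of certain ground-model subtrees of ${}\sp{<\omega}2$, which is absolute. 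Hence $q\Vdash``U\cap B=\emptyset\textrm{''}$, and therefore $q\Vdash``U\cap\dot N\subseteq U\cap B=\emptyset\textrm{''}$, which is what we wanted.

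The main obstacle is exactly the gap between the hypothesis, which only speaks of closed nowhere dense subsets of the compact space $\csf$, and what is needed, namely the analogous statement for the locally compact space $\csfo$; this is bridged by the open dense embedding $e$ together with the stability of ``closed nowhere dense'' under passing to $\csf$-closures. Everything else --- reinterpreting Borel codes in the extension, the absoluteness of disjointness, and the permanence of the filter structure --- is routine forcing bookkeeping that can be dispatched quickly.
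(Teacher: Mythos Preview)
Your proof is correct and follows the same strategy as the paper's: given a name $\dot N$ for a nowhere dense subset of $\csfo$ in the extension, cover it by a ground-model closed nowhere dense set via nwd-bounding, and then invoke remoteness of $\F$ in $\Vuniverse$ to find a disjoint $U\in\F$. The paper's argument is three lines and applies the nwd-bounding hypothesis directly to $\csfo$ without comment, whereas you explicitly bridge the gap between $\csf$ (where nwd-bounding is stated) and $\csfo$ via the open dense embedding $e$ and also spell out the absoluteness of $U\cap B=\emptyset$ --- details the paper leaves implicit.
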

\begin{proof}
Let $p\in\poset$ and $\dot{N}$ a name such that $p\Vdash``\dot{N}\textrm{ is a nowhere dense set of } \Vuniverse\sp{\poset}\textrm{''}$. There is a closed nowhere dense set $M\in\Vuniverse$ and $q\in\poset$ with $q\leq p$ such that $q\Vdash``\dot{N}\subset M\textrm{''}$. Since $\F$ is remote in $\Vuniverse$, there is $V\in\F$ such that $V\cap M=\emptyset$. Then $q\Vdash``\exists U\in\F\ (\dot{N}\cap U=\emptyset)\textrm{''}$.
\end{proof}

\begin{coro}\label{coroforcing}
Assume that $\Vuniverse\models CH$ and let $X$ be the first countable space constructed in Theorem \ref{topothm}. If $\poset$ is a forcing notion that is nwd-bounding, then 
$$
\Vuniverse\sp\poset\models\textrm{ ``the one-point compactification of $X$ is not discretely generated''}.
$$
\end{coro}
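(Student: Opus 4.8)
The plan is to unwind the construction in the proof of Theorem \ref{topothm} until its only non-absolute ingredient is exposed --- the remoteness of the filter it is built from --- and then to recover that ingredient in the forcing extension via Theorem \ref{filterpreserved}. Recall the construction. Since $\Vuniverse\models CH$ forces $\Pint=\cofM=\omega_1$, Theorem \ref{filterlinearlyordered} yields a sequence $\langle U_\alpha:\alpha<\omega_1\rangle$ of clopen subsets of $\csfo$ such that $U_\beta\subset\sp\ast U_\alpha$ whenever $\alpha<\beta$, each $U_\alpha$ meets every fibre $\{n\}\times\csf$, and $\{U_\alpha:\alpha<\omega_1\}$ generates a remote filter $\F$; and $X$ is produced from these data as in the proof of Theorem \ref{topothm}: put $V_\alpha=\cl[\beta(\csfo)]{U_\alpha}\cap(\csfo)\sp\ast$, form the closed sets $F_\alpha$ together with $F=\bigcap_\alpha V_\alpha$, and let the one-point compactification of $X$ be the quotient $Z$ of $\beta(\csfo)$ collapsing $F$ to a point (still written $F$) and each $F_\alpha$ to a point; here $\csfo$ is open and dense in the compact Hausdorff space $Z$, the quotient map $q\colon\beta(\csfo)\to Z$ is closed, $q\sp\leftarrow[\{F\}]=F$, and $X=Z\setminus\{F\}$.

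Now I would pass to $\Vuniverse\sp\poset$. The sequence $\langle U_\alpha:\alpha<\omega_1\rangle$ is still present, and the two properties of it used above (``$U_\beta\subset\sp\ast U_\alpha$'' and ``meets every fibre'') are absolute, so $\{U_\alpha:\alpha<\omega_1\}$ still generates a filter $\F$ every member of which has noncompact complement. Since $\poset$ is nwd-bounding, Theorem \ref{filterpreserved} tells us that $\F$ is still a remote filter of clopen sets of $\csfo$ in $\Vuniverse\sp\poset$. Consequently the construction of $Z$ reruns there: every step is either absolute --- the combinatorics of the $\subset\sp\ast$-chain, and with it the fact that $Z$ is a compact Hausdorff compactification of $\csfo$ with the stated closed quotient map, verified there by the same argument --- or is the one step where remoteness of $\F$ enters, which is preserved. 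So in $\Vuniverse\sp\poset$ we again have a compact Hausdorff $Z$, a compactification of $\csfo$, with $X=Z\setminus\{F\}$ locally compact and $Z$ its one-point compactification, and with $F$ (the set $\bigcap_\alpha V_\alpha$) a remote closed subset of $\csfo$.

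It then remains, in $\Vuniverse\sp\poset$, to rerun the single piece of the proof of Theorem \ref{topothm} that matters here: that $Z$ is not discretely generated at $F$. As $\csfo$ is dense in $Z$, $F\in\cl[Z]{\csfo}$. If $D\subset\csfo$ is discrete, then $\cl[Z]{D}=q[\cl[\beta(\csfo)]{D}]$, because $q$ is closed and fixes $\csfo$ pointwise; since $q\sp\leftarrow[\{F\}]=F$, $F\in\cl[Z]{D}$ would force $F\cap\cl[\beta(\csfo)]{D}\neq\emptyset$. But $F$ is remote --- equivalently, far from $\csfo$, as $\csfo$ is crowded and metrizable --- so that intersection is empty. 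Hence $A=\csfo$ witnesses that $Z$ is not discretely generated at $F$, which is the assertion of the corollary.

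The real work is bookkeeping rather than mathematics: one must agree that ``$X$'' (and hence $Z$) in $\Vuniverse\sp\poset$ means the object recomputed from the absolute data $\langle U_\alpha:\alpha<\omega_1\rangle$ --- the underlying set gains new points, but the recipe is unchanged --- and one should record that $\poset$ does not collapse $\omega_1$: otherwise $\{U_\alpha:\alpha<\omega_1\}$ would be a countable base of the remote filter $\F$ in $\Vuniverse\sp\poset$, giving $\chi(F)\leq\omega$, whereas $\chi(F)\geq\covM\geq\omega_1$ by Theorem \ref{geqcovM} --- and in any case every concrete nwd-bounding forcing named above is proper, hence $\omega_1$-preserving. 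The only genuinely load-bearing step is Theorem \ref{filterpreserved} feeding into the discrete-generation argument already present in the proof of Theorem \ref{topothm}; I do not anticipate any obstacle beyond pinning down this ``same $X$'' identification.
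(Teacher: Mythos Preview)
Your proposal is correct and follows exactly the route the paper intends: the corollary is stated without proof because the only non-absolute ingredient in the construction of Theorem \ref{topothm} is the remoteness of the filter $\F$, and Theorem \ref{filterpreserved} guarantees that this survives any nwd-bounding extension. Your careful handling of what ``$X$ in $\Vuniverse^{\poset}$'' should mean (recompute from the absolute data $\langle U_\alpha:\alpha<\omega_1\rangle$) and your side observation that $\poset$ must preserve $\omega_1$ (via Theorem \ref{geqcovM}) are welcome clarifications the paper leaves implicit.
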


Notice that by Corollary \ref{coroforcing} and Example \ref{resultch} we have examples of models where there are spaces as required in Question \ref{thequestion}. Namely we have the Sacks model (both by iteration and side-by-side), the Silver model and forcing with infinitely equal forcing. Notice that however in all of these models the equality $\omega_1=\Pint=\cofM$ holds so it is possible to use Theorem \ref{topothm} to infer the existence of spaces as required in Question \ref{thequestion}.

\begin{ques}
Let $\poset$ be a forcing notion that preserves remote filters. Is it true that $\poset$ is nwd-bounding?
\end{ques}

We also include the following result about the character of remote filters here.

\begin{coro}
It is consistent that there is a remote filter of character $\omega_1$ in $\csfo$ but every point of $(\csfo)\sp\ast$ has character $\C=\omega_2$.
\end{coro}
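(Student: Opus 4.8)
The plan is to combine a ZFC lower bound on the characters of points of $(\csfo)\sp\ast$ with the bounds on $\remcard$ already established. The bound I would prove is: \emph{for every $p\in(\csfo)\sp\ast$ one has $\mathfrak{u}\le\chi(p)\le\C$}, where $\mathfrak{u}$ is the least character of a free ultrafilter on $\omega$. The upper bound is immediate, because $\beta(\csfo)$ has weight $|\mathrm{Clop}(\csfo)|=\C$.

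For the lower bound, let $\pi\colon\csfo\to\omega$ be the first-coordinate projection. It is perfect, so it extends to a continuous surjection $\hat\pi\colon\beta(\csfo)\to\beta\omega$ with $\hat\pi\sp\leftarrow[\{n\}]=\{n\}\times\csf$ for each $n<\omega$, whence $\hat\pi\sp\leftarrow[\omega\sp\ast]=(\csfo)\sp\ast$ and $\hat\pi\sp\leftarrow[\cl[\beta\omega]{B}]=\cl[\beta(\csfo)]{\bigcup_{n\in B}(\{n\}\times\csf)}$ for all $B\subset\omega$. Fix $p\in(\csfo)\sp\ast$ and set $u=\hat\pi(p)\in\omega\sp\ast$. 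Since $\csfo$ is strongly $0$-dimensional, $p$ has a neighborhood base of the form $\{\Ex[\csfo]{V_\alpha}:\alpha<\lambda\}$ with each $V_\alpha$ clopen (and $\Ex[\csfo]{V}=\cl[\beta(\csfo)]{V}$ when $V$ is clopen). Writing $V_\alpha=\bigcup_{n<\omega}(\{n\}\times C_{\alpha,n})$ with each $C_{\alpha,n}$ clopen in $\csf$, and $A_\alpha=\{n<\omega:C_{\alpha,n}\neq\emptyset\}$, one checks with the displayed identity that $A_\alpha\in u$; that for every $A\in u$ the set $\Ex[\csfo]{\bigcup_{n\in A}(\{n\}\times\csf)}$ is a neighborhood of $p$; and that $\Ex[\csfo]{V_\alpha}\subset\Ex[\csfo]{\bigcup_{n\in A}(\{n\}\times\csf)}$ implies $A_\alpha\subset A$. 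Hence $\{A_\alpha:\alpha<\lambda\}$ is a base for $u$, so $\lambda\ge\chi(u,\omega\sp\ast)\ge\mathfrak{u}$.

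Granting this, it is enough to work in a model of $\cofM=\omega_1$ together with $\mathfrak{u}=\C=\omega_2$. In such a model, Corollary \ref{inequalities} gives a non-empty remote closed subset $F$ of $\csfo$ with $\omega_1=\covM\le\chi(F)\le\cofM=\omega_1$; dualizing via strong $0$-dimensionality, there is a remote filter of $\csfo$ of character $\omega_1$. On the other hand, the bound above forces every $p\in(\csfo)\sp\ast$ to satisfy $\omega_2=\mathfrak{u}\le\chi(p)\le\C=\omega_2$, so $\chi(p)=\omega_2$.

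The main obstacle is the consistency of ``$\cofM=\omega_1$ and $\mathfrak{u}=\C=\omega_2$''. The natural approach is a countable support iteration of length $\omega_2$ over a model of CH whose iterands are proper, $\bairew$-bounding and nwd-bounding: the iteration theorem for nwd-bounding forcings (Section 4) together with Theorem \ref{cofnwdrationals} then keeps $\cofM=\omega_1$, since the $\omega_1$-many ground-model closed nowhere dense subsets of $\csf$ stay cofinal. The iterands must also, at cofinally many stages, add a real splitting every ground-model infinite subset of $\omega$, so that by a bookkeeping argument over all potential $\omega_1$-generated ultrafilters one gets $\mathfrak{u}=\omega_2$. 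The delicate point is that the usual devices for diagonalizing away an ultrafilter add either a dominating real (raising $\d\le\cofM$) or a Cohen or eventually different real (raising $\covM\le\cofM$), each of which would push $\cofM$ up to $\omega_2$; so one needs an iterand — of creature-forcing type — that adds splitting reals while remaining $\bairew$-bounding and nwd-bounding. I would cite the relevant construction (equivalently, the known consistency of $\cofM=\omega_1$ with $\mathfrak{r}=\C$, which since $\mathfrak{r}\le\mathfrak{u}\le\C$ yields $\mathfrak{u}=\C$) for this step.
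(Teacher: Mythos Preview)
Your argument is correct and follows the same architecture as the paper's: transfer character from $(\csfo)\sp\ast$ down to $\omega\sp\ast$ via the projection, and work in a model where $\cofM=\omega_1$ while every free ultrafilter on $\omega$ has character $\omega_2$. Two points of comparison are worth noting.

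First, for the lower bound $\chi(p)\ge\mathfrak{u}$ the paper invokes van Douwen's result that the extension $\beta f:\beta(\csfo)\to\beta\omega$ is \emph{open}, so that the images $\{\beta f[U]:U\in\U\}$ of a local base at $p$ already form a local base at $\beta f(p)$. Your direct computation with the sets $A_\alpha=\{n:C_{\alpha,n}\neq\emptyset\}$ achieves the same thing without citing that theorem, which is a small gain in self-containment.

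Second, and more substantively, you make the model construction harder than it is. You correctly identify the requirements---a proper, $\bairew$-bounding, nwd-bounding iterand that adds splitting reals---but then reach for creature forcing or an external consistency citation about $\mathfrak{r}$. The paper simply uses the countable support iteration of \emph{Silver forcing} of length $\omega_2$ over a model of CH: Silver forcing has the Sacks property (hence is nwd-bounding and $\bairew$-bounding) and adds splitting reals, so a standard reflection argument gives $\mathfrak{u}=\omega_2$ while $\cofM$ stays $\omega_1$. This removes the ``delicate point'' you flag entirely. Your use of Corollary~\ref{inequalities} in the extension to obtain the remote filter of character $\omega_1$ is fine; the paper instead preserves a ground-model remote filter via Theorem~\ref{filterpreserved}, but either route works.
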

\begin{proof}
Choose any model of ZFC where $\cofM=\omega_1$ and every ultrafilter in $\omega$ has character $\C=\omega_2$. For example, take a model of $CH$ and take the countable support iteration of Silver forcing with length $\omega_2$ (see \cite[Chapter 22]{halbeisen}). Silver forcing has the Sacks property so there are remote filters of character $\omega_1$ (Theorems \ref{filterlinearlyordered} and \ref{filterpreserved}). Further, Silver forcing adds splitting reals (\cite[Lemma 22.3]{halbeisen}) and from this it easily follows that all ultrafilters in $\omega$ in the generic extension must have character $\omega_2$.

Let $f:\csfo\to\omega$ be the function such that $f(x)=n$ if $x\in\{n\}\times\csf$. Let $\beta f:\beta(\csfo)\to\beta\omega$ be the unique continuous extension. Notice that $\beta f[(\csfo)\sp\ast]=\omega\sp\ast$ because $f$ is continuous and perfect. By \cite[Theorem 2.2]{vd82}, $\beta f$ is open.

Let $p\in(\csfo)\sp\ast$ and assume that $\U$ is a local base of open subsets of $\beta(\csfo)$ at $p$. Then it is not hard to see that $\{\beta f[U]:U\in\U\}$ is a local base of open subsets of $\beta\omega$ at $\beta f(p)$. Since the character of $\beta f(p)$ in $\beta\omega$ is $\C$, then $|\U|\geq\C$. So the character of $p$ in $\beta(\csfo)$ is $\C=\omega_2$.
\end{proof}

\section{Some other remarks and problems}

Before going to the questions, let us explore another cardinal invariant directly related to discretely generated spaces. Let $\dg$ be the smallest cardinal $\kappa$ such that there is a non-discretely generated countable regular space with weight $\kappa$. The proof of Theorem \ref{geqcovM} can be easily modified to show the following.

\begin{thm}
$\covM\leq\dg\leq\remcard$
\end{thm}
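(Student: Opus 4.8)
The plan is to mimic the two-sided argument already carried out for $\remcard$ in this paper, transplanting both bounds to the invariant $\dg$. For the lower bound $\covM\le\dg$, I would take a countable regular space $Z$ with weight $<\covM$ and show it is discretely generated. So fix $p\in Z$ and $A\subset Z$ with $p\in\cl[Z]{A}$; I must produce a discrete $D\subset A$ with $p\in\cl[Z]{D}$. The idea is to reuse the combinatorial core of the proof of Theorem \ref{geqcovM}. Using a base at $p$ of size $\kappa<\covM$, I would build a decreasing sequence of open neighborhoods and, inside their successive differences, countable dense pieces of $A$ (after first passing to a subspace where $A$ is dense and $p$ is a limit point, using that $Z$ is countable and crowded around $p$ in the relevant sense). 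For each basic neighborhood $U_\alpha$ of $p$ one extracts an infinite partial choice function; the poset $\poset=({}\sp{<\omega}\omega,\supset)$ together with the dense sets $D(\alpha,n)$ and $\MA[\ctble]{\kappa}$ (Theorem \ref{martins}(b)) yields a function $f$ such that $D=\{x(n,f(n)):n<\omega\}$ is a discrete subset of $A$ meeting every $U_\alpha$, whence $p\in\cl[Z]{D}$. This shows no countable regular space of weight $<\covM$ can fail to be discretely generated, i.e. $\covM\le\dg$.

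For the upper bound $\dg\le\remcard$, I would start from a remote closed set $F\subset\beta\Q$ with $\chi(F)=\remcard$ and manufacture from it a countable non-discretely-generated regular space of weight $\remcard$. The natural candidate is $Z=\Q\cup\{F\}$, topologized so that neighborhoods of the point $F$ are the sets $U\cap\Q$ where $U$ ranges over the (clopen, by $0$-dimensionality) sets of a base for $F$ in $\beta\Q$; equivalently, $Z$ is the quotient of $(\Q\cup\{F\})\subset\beta\Q$ collapsing nothing. This $Z$ is countable and regular, and its weight is $\remcard$ since the only non-first-countable point is $F$, whose character is $\chi(F)=\remcard$. Then $F\in\cl[Z]{\Q}$ but remoteness of $F$ means no discrete (equivalently nowhere dense, by crowdedness of $\Q$) subset of $\Q$ has $F$ in its closure in $\beta\Q$, hence none does in $Z$; so $Z$ is not discretely generated at $F$. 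Therefore $\dg\le\remcard$.

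I expect the main obstacle to be the lower-bound half: making the reduction in the proof of Theorem \ref{geqcovM} work in the abstract countable-regular setting rather than in $\beta X$. Specifically, the original argument produces the countably many crowded open pieces $W_n$ using that $X$ is Lindel\"of metrizable and sits densely in $\beta X$; here I only have a countable regular space $Z$ and a point $p$ with $\chi(p)<\covM$, so I need to argue directly that there is a decreasing neighborhood base-like sequence at $p$ and, inside it, that $A$ accumulates at $p$ in a suitably ``spread out'' way permitting the selection of the partial functions $f_\alpha$. The honest statement is just that these are routine modifications of the cited proof — the paper itself asserts this — so the write-up should point to Theorem \ref{geqcovM} and indicate the three or four changes (replace $\Ex{V_n}$-neighborhoods by abstract neighborhoods of $p$, replace ``remote/discrete in $\beta X$'' by ``$p\in\cl[Z]{D}$'', and use $\MA[\ctble]{\kappa}$ exactly as before) rather than reproving it from scratch.

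\begin{proof}
For $\covM\le\dg$: let $Z$ be a countable regular space with $w(Z)<\covM$; we check $Z$ is discretely generated. Fix $p\in Z$ and $A\subset Z$ with $p\in\cl[Z]{A}\setminus A$; we may assume $p$ is a non-isolated point and, shrinking $Z$ if necessary, that $A$ is dense in $Z$. Fix a base $\{U_\alpha:\alpha<\kappa\}$ at $p$ with $\kappa=w(Z)<\covM$, and build a decreasing sequence of open neighborhoods $\{V_n:n<\omega\}$ of $p$ with $\cl[Z]{V_{n+1}}\subset V_n$ and such that each $W_n=V_n\setminus\cl[Z]{V_{n+1}}$ meets $A$; enumerate $A\cap W_n$ as $\{x(n,m):m<\omega\}$. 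For each $\alpha<\kappa$ pick an infinite $E_\alpha\subset\omega$ and $f_\alpha:E_\alpha\to\omega$ with $x(n,f_\alpha(n))\in U_\alpha$ for $n\in E_\alpha$. Exactly as in the proof of Theorem \ref{geqcovM}, work in the countable poset $\poset=({}\sp{<\omega}{\omega},\supset)$ with the dense sets $D(\alpha,n)=\{q\in\poset:\exists\,m>n\ (m\in {dom}(q)\cap E_\alpha,\ q(m)=f_\alpha(m))\}$; by Theorem \ref{martins}(b) and $\kappa<\covM$ there is a filter $G$ meeting every $D(\alpha,n)$, and $f=\bigcup G\in\bairew$ satisfies: for every $\alpha<\kappa$, $\{n\in E_\alpha:f(n)=f_\alpha(n)\}$ is infinite. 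Then $D=\{x(n,f(n)):n<\omega\}\subset A$ is discrete (at most one point of $D$ lies in each $W_n$ and the $W_n$ form a discrete family away from $p$) and $p\in\cl[Z]{D}$, since every $U_\alpha$ contains a point of $D$. Hence $Z$ is discretely generated, and $\covM\le\dg$.

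For $\dg\le\remcard$: choose a remote closed set $F\subset\beta\Q$ with $\chi(F)=\remcard$. Since $\Q$ is strongly $0$-dimensional, fix a base $\{U_\alpha:\alpha<\remcard\}$ of clopen subsets of $\Q$ such that $F=\bigcap\{\cl[\beta\Q]{U_\alpha}:\alpha<\remcard\}$. Let $Z=\Q\cup\{\infty\}$, where a neighborhood base at a point of $\Q$ is its usual one and a neighborhood base at $\infty$ is $\{U_\alpha\cup\{\infty\}:\alpha<\remcard\}$. Then $Z$ is countable and regular (regularity at $\infty$ holds because the $U_\alpha$ are clopen), and $w(Z)=\remcard$, since $\Q$ is first countable and $\chi(\infty,Z)=\chi(F,\beta\Q)=\remcard$. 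Finally $\infty\in\cl[Z]{\Q}$, but if $D\subset\Q$ were discrete with $\infty\in\cl[Z]{D}$, then $D$ would be a closed discrete subset of $\Q$ (hence nowhere dense) with $F\cap\cl[\beta\Q]{D}\neq\emptyset$, contradicting that $F$ is remote (equivalently, far, by crowdedness of $\Q$). So $Z$ is not discretely generated at $\infty$, whence $\dg\le w(Z)=\remcard$.
\end{proof}
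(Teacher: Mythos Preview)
Your overall strategy matches the paper's: the upper bound is obtained by collapsing a remote closed set of $\Q$ to a point, and the lower bound by rerunning the combinatorics of Theorem \ref{geqcovM} inside an arbitrary countable regular space of small weight. Two points are worth flagging.

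\textbf{Lower bound.} As written, the existence of an infinite $E_\alpha$ is not justified. You only arranged that each $W_n$ meets $A$; you did \emph{not} arrange that each basic neighborhood $U_\alpha$ meets $A\cap W_n$ for infinitely many $n$. If $\bigcap_n V_n$ is strictly larger than $\{p\}$ (which can happen, since $\chi(p,Z)$ may be uncountable), some $U_\alpha$ could sit entirely inside $\bigcap_n V_n$ and miss every $W_n$. The fix is easy but essential: use countability of $Z$ to choose the $V_n$ so that $\bigcap_n V_n=\{p\}$ (enumerate $Z\setminus\{p\}$ and throw out one point at each step); then for every $\alpha$ and every $n$ the open set $U_\alpha\cap V_n\ne\{p\}$ meets some $W_m$ with $m\ge n$, and density of $A$ gives the required $E_\alpha$. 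The paper sidesteps this issue by decomposing $A$ directly as $A=\bigcup_n A_n$ with each $A_n$ clopen in $A$ and $p\notin\cl[Z]{A_n}$ (using that countable regular spaces are $0$-dimensional); then $U_\alpha$ automatically meets infinitely many $A_n$, since otherwise $p\in\cl[Z]{U_\alpha\cap A}\subset\cl[Z]{A_{n_1}\cup\cdots\cup A_{n_k}}$, a contradiction.

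\textbf{Upper bound.} Your claim that ``$D$ would be a closed discrete subset of $\Q$'' is not justified (discrete subsets of $\Q$ need not be closed), but it is also unnecessary: in a crowded $T_1$ space every discrete subspace is nowhere dense, and remoteness of $F$ already gives $F\cap\cl[\beta\Q]{D}=\emptyset$. Also, make sure your neighborhood base at $\infty$ is closed under finite intersections (or pass to the filter it generates); otherwise the compactness step ``$\infty\in\cl[Z]{D}\Rightarrow F\cap\cl[\beta\Q]{D}\ne\emptyset$'' needs an extra line. The paper's formulation---take $\Q\cup\{F\}$ as a subspace of the quotient $\beta\Q/F$---makes this automatic.
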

\begin{proof}
To prove that $\dg\leq\remcard$, let $F$ be a remote closed set of $\Q$ with $\chi(F)=\remcard$ and consider $\Q\cup \{F\}$ as the subspace of the quotient of  $\beta\Q$ defined by shrinking $F$ to a point.

Now assume that $Q$ is a countable regular space with weight $\kappa<\covM$, we shall prove that $Q$ is discretely generated. Let $A\subset Q$ and $p\in\cl[Q]{A}$, we may assume that $p\notin A$. We may write $A=\bigcup\{A_n:n<\omega\}$ where $A_n$ is clopen in $A$ and $p\notin\cl[A]{A_n}$ for each $n<\omega$. Let $\{U_\alpha:\alpha<\kappa\}$ be a local base at $p$.

For each $n<\omega$, let $A_n=\{x(n,m):m<\omega\}$ be an enumeration and let $\B_n$ be the collection of all open subsets of $A_n$. Also, for each $n,m<\omega$, let $\B_n(m)=\{U\in\B_n:x(n,m)\in U\}$.

Considering the poset $\poset=({}\sp{<\omega}{\omega},\supset)$, we may follow the methods of Theorem \ref{geqcovM} to find a function $g:\omega\to\omega$ such that for every $\alpha<\kappa$ there is $n<\omega$ such that $x(n,g(n))\in U_\alpha$. This proves that $\{x(n,g(n)):n<\omega\}$ is a closed discrete subset of $A$ that has $p$ in its closure. So $Q$ is discretely generated.
\end{proof}

Aside from the original Question \ref{thequestion} that motivated the topic of this paper, we have moved towards studying the cardinal invariant $\remcard$. Also, more generally, we would also like to know more about the structure of remote filters.

\begin{ques}
Is there a remote filter (in $\csfo$) that has a base that is well-ordered in ZFC?
\end{ques}

\begin{ques}\label{Qvssepmet}
If $X$ is a non-compact, crowded, separable and metrizable space, is $\remcard(X)=\remcard$?
\end{ques}

\begin{ques}
Is $\remcard=\cofM$?
\end{ques}

\begin{ques}
Is $\dg=\remcard$?
\end{ques}

\section*{Aknowledgements}
The author would like to thank Professors Angel Tamariz-Mascar\'ua and Richard Wilson for encouraging me to take a look at discretely generated spaces and the Set Theory and Topology group from the Centro de Ciencias Matem\'aticas at Morelia for their comments. Finally, I also thank one of the anonymous referees for detecting an error in a preliminary version of this paper, which lead to the formulation of Question \ref{Qvssepmet}.

\end{document}